\newcommand{\Lin}{\mathcal{L}}
\newcommand{\Seg}{\mathcal{S}}
\newcommand{\Pfin}{\mathcal{P}^{f}}
\newcommand{\Pinf}{\mathcal{P^{\infty}}}
\newcommand{\Fold}[1]{\mathcal{G}_{#1}}
\newcommand{\Rot}{\mathcal{R}}
\newcommand{\Ref}{\mathcal{V}}
\newcommand{\norm}[1]{\left\Vert#1\right\Vert}
\newcommand{\eps}{\varepsilon}
\newcommand{\E}{\mathcal{E}}
\newcommand{\F}{\mathcal{F}}
\newcommand{\Haus}{\mathcal{H}}
\newcommand{\Leb}{\mathcal{L}}
\newcommand{\Saut}{J}
\newcommand{\Hun}{\mathcal{H}^1}
\newcommand{\N}{\mathbb{N}}
\newcommand{\R}{\mathbb{R}}
\newcommand{\om}{\Omega}
\newcommand{\dom}{{\partial \Omega}}
\newcommand{\dys}{\displaystyle}
\newcommand{\interior}[1]{\stackrel{\circ}{#1}}
\newtheorem{thm}{Theorem}[section]
\newtheorem{theorem}[thm]{Theorem}
\newtheorem{lemma}[thm]{Lemma}
\newtheorem{proposition}[thm]{Proposition}
\newtheorem{definition}[thm]{Definition}
\theoremstyle{definition}
\theoremstyle{remark}
\newtheorem{remark}[thm]{Remark}
\newtheorem{example}[thm]{Example}
\numberwithin{equation}{section}
\title{A selection criterion of solutions to a system of eikonal equations}
\author{Gisella Croce and Giovanni Pisante}
\address{Laboratoire de Math\'ematiques Appliqu\'ees du Havre, Universit\'e du Havre, 25, rue Philippe Lebon, 76063 Le Havre (FRANCE)
\\ 
Dipartimento di Matematica, Seconda Universit\`a degli studi di Napoli, Via Vivaldi, 43, 81100 Caserta (ITALY)}
\email{gisella.croce@univ-lehavre.fr, pisante@unina.it}
\subjclass{35F50, 35F55,  49J40, 49J45}
\keywords{almost everywhere solutions, eikonal equation, 
functions of bounded variation, direct methods of the calculus of variations}
\begin{document}
\maketitle

\section{Introduction}\label{sec-intro}
In this article we deal with the following system of eikonal equations:
\begin{equation}\label{SEE}
\left\{
\begin{array}{cl}
\displaystyle \left|\frac{\partial u}{\partial x_i}\right|=1\,,\,
&i=1,\dots, N,\,\, \mbox{a.e. in}\, \,\Omega
\vspace{0.3cm}\\
u=0\,, & \mbox{on} \,\,\, \partial \Omega\,,
\end{array}
\right. 
\end{equation}
where  $\Omega$ is an open bounded 
connected Lipschitz domain of  $\R^N$. 
System (\ref{SEE}) arises in several nonlinear models in mechanics
and material science and the problem of the existence of solutions has
been widely studied. With no attempt to be exhaustive here we recall
some results that have motivated our study.

Examining the {\it pyramidal construction}, introduced in
\cite{cellina} and \cite{friesecke}, one can easily see that there
exist infinitely many $W^{1,\infty}_0(\Omega)$ solutions of \eqref{SEE}.
Indeed, if $Q\subset \R^N$ is a hyperrectangle oriented in such a way
that the exterior
normal to its faces is contained in the
set $$E=\{x=(x_1,\dots,x_N)\in \R^N : |x_1|=\dots=|x_N|\},$$  then the
distance function in the $l^1-$norm from the boundary of $Q$,
$d_1(\cdot, \partial Q)$, solves problem (\ref{SEE}) in  $Q$. 
In a general domain $\Omega$, 
Vitali's theorem allows us to cover $\Omega$, up to a Lebesgue measure zero set,
by a countable union of domains $Q_i$ as before. Then the function defined by 
$d_1(\cdot, \partial Q_i)$ {in} $Q_i, i \in \N$, and 0 elsewhere,
is a $W^{1,\infty}(\Omega)$ solution to system (\ref{SEE}).
Since there are infinitely many Vitali's coverings of $\Omega$,
 problem (\ref{SEE}) admits  infinitely many $W^{1,\infty}(\Omega)$ solutions.
As a consequence, 
it is an interesting question to select and characterize 
a particular class of solutions or better one single solution.

Let $\mathfrak{S}(\Omega)$ denote the set of the
$W^{1,\infty}(\Omega)$ solutions to problem (\ref{SEE}).
One possible strategy to select a specific function in $\mathfrak{S}(\Omega)$
can be developed using the theory of viscosity solution, introduced
by Crandall and Lions (see \cite{Crandall-Lions,implicitdacomarc}).
In this approach one takes advantage of the various useful properties enjoyed by viscosity solutions, such as maximality, uniqueness and explicit formulas.
 To do that, note that system (\ref{SEE}) is  equivalent to
\begin{equation}
\left\{
\begin{array}{cl}
\displaystyle F(Du)=0\,,\,
&\mbox{a.e. in} \,\Omega
\vspace{0.1cm}\\
u=0\,, &\mbox{on}\,\, \partial \Omega\,,
\end{array}
\right.\label{viscointroduction}
\end{equation}
where $F:\R^N\to \R$ is any continuous function  which is zero if and only if  $|x_i|=1$ for every $i=1,\dots,N$.
One therefore investigates the existence of 
a viscosity solution to problem
(\ref{viscointroduction}) with any suitably chosen $F$. 
It is well known that for $N=1$, $u(x)=d_1(x,\partial \Omega)$ is the unique viscosity solution to problem (\ref{viscointroduction}).
In higher dimensions, some geometrical conditions on the domain come
into play. 
Indeed in \cite{C-D-G-G} and \cite{Pisante} it is proved that there exists a viscosity solution to (\ref{viscointroduction}) 
if and only if $\Omega$ is a 
hyperrectangle such that the normals to each
face lie in $E$.   
In this case, $d_{1}(\cdot,\partial \Omega)$ is a viscosity solution. 
Although the previous result is in some sense negative,
it was nonetheless put to use in \cite{dacorogna-marcellini}  to
select
a special solution to problem (\ref{SEE}) (and to more general problems). 
There the authors construct a Vitali covering of $\Omega$ made up of domains $\Omega_i$ 
admitting a viscosity solution and define the relative viscosity solution over each of these sets. The  covering is built in a recursive way, with the idea of taking, at every step, the largest possible hyperrectangle.

Another  strategy to characterize a class of functions in $\mathfrak{S}(\Omega)$ 
is to use a variational method, that is, to choose a meaningful
functional over $\mathfrak{S}(\Omega)$ and to optimize it. 
The minimizers or maximizers, supposing they exist, 
would be selected solutions to problem (\ref{SEE}).
There is an evident difficulty to apply this method, since the set $\mathfrak{S}(\Omega)$ is not convex.
For example the natural functionals 
$$
v \to \int_{\Omega}|v|^p\,,\quad p\geq 1, 
$$
have in general neither  a minimizer nor a maximizer 
over $\mathfrak{S}(\Omega)$. 
Indeed any minimizing sequence converges to 0, which does not belong to $\mathfrak{S}(\Omega),$ 
and the limit function of maximizing sequences is 
plus or minus the distance from $\partial \Omega$ in the $l^1$-norm, which usually do 
not verify (\ref{SEE}).

In \cite{dacorogna-glowinski-pan} we find the first attempt of selection through a variational method. The authors study 
numerically a variational problem over the set of non-negative 
solutions to problem (\ref{SEE}): 
they obtain a maximizing sequence for the problem
\begin{equation}\label{glow}
\sup\left\{\int_\Omega u, \quad u\geq 0,\,\, u \in \mathfrak{S}(\Omega) 
\right\}
\end{equation}
through the numerical minimization of 
$$
\mathcal{J}(u):= -\int_\Omega u + \frac{1}{2}\int_\Omega | \nabla u |^2 
+\frac{\eps}{2}\int_\Omega |\Delta u|^2 + \frac{1}{2\eps}
\sum_{i=1}^{N} \int_\Omega\left(\left|\frac{\partial u}{\partial x_i}\right| -1 \right)^2\,.
$$
Unfortunately, as we said above, there is in general no optimal solution 
to the variational problem (\ref{glow}). Nevertheless, according to the seminal idea presented in
\cite{dacorogna-glowinski-pan} of selecting "regular solutions", it seems to be
natural to minimize in some way the discontinuity set of the gradient
of the solutions. With this aim in mind let us consider 
the functions $v\in \mathfrak{S}(\Omega)$ such that  the distributional gradient of $\frac{\partial v}{\partial x_i}, i=1,\dots,N,$ has no Cantor part locally, that is, $\frac{\partial v}{\partial x_i}$ is a $SBV_{loc}(\Omega)$ function (see section 2 for  further details).  
If ${J}_{\frac{\partial v}{\partial x_i}}$ denotes the 
jump set of $\frac{\partial v}{\partial x_i}$, $i=1,\dots,N$,  
one could try to minimize the functional
$$
\mathcal{J}_t(v):= \mathcal{H}^t\left(\bigcup_{i=1}^N {J}_{\frac{\partial v}{\partial x_i}}\right)
$$ 
over
$$
\mathcal{E}(\Omega)=\left\{v \in \mathfrak{S}(\Omega): \frac{\partial v}{\partial x_i}\in SBV_{loc}(\Omega), i=1,\dots,N\right\},
$$ for some $t\geq N-1$.
As shown in \cite{Champion-Croce},  for $t=N-1$, this
variational problem is not well-posed, since $\mathcal{J}_{N-1}(v)$ can
be infinite for every $v \in \mathcal{E}(\Omega)$, in general. Indeed
the jump set of $Dv$ could have a fractal behaviour near the boundary of $\Omega$. 
From the other hand,
$\mathcal{H}^t\left({J}_{\frac{\partial v}{\partial x_i}}\right)=0
$ for every $i=1,\dots,N$, for every function in $\mathcal{E}(\Omega)$ 
and for every $t>N-1$ (see Section \ref{Main result}).
The above negative result on 
the variational problem
$$
\inf\left\{\mathcal{H}^{N-1}\left(\bigcup_{i=1}^N{J}_{\frac{\partial v}{\partial x_i}}\right), v\in \mathcal{E}(\Omega)\right\}
$$
leads the authors of  \cite{Champion-Croce} to consider a weighted $\mathcal{H}^{N-1}$ measure of the jumps. Using a given increasing 
sequence $\Omega_n\subset \Omega$ of polyhedra
whose boundary is composed of a finite number of hyperplanes with
normals lying in $E$, 
they define a $C_0$ function $h:\Omega \to \R^+$ and  minimize
$$
v\to \sum_{i=1}^N\int_{J_{\frac{\partial v}{\partial x_i}}} h(x)d\mathcal{H}^{N-1}(x)
$$
over $\mathcal{E}(\Omega)$. 
It is obvious that this selection criterion depends on the 
definition of the function $h$ and in turn, $h$  depends on the
particular sequence of sets $\Omega_n$. 
At the same time it is clear that the candidate functional should
depend on some geometric properties of the domain $\Omega$. 

Our aim in this paper is to find an "optimal" weighted measure of the jump set of $Dv$. 
We propose a family of weights which depends only on the distance from the boundary 
of $\Omega$ and then minimize the corresponding variational problem.

We  assume that $\Omega\subset \R^2$ is a {\it compatible} domain, according to Definition \ref{def:01}. Observe that any convex, open, bounded, Lipschitz subset of $\R^2$ such that $\partial \Omega$ is composed of a finite number of $C^1$ curves is compatible. A polygon is a compatible domain, as well.
This hypothesis is motivated in section 3 through some enlightening examples. Our main result is the following
\begin{theorem}\label{enunciato_intro}
Let $\Omega$ be a compatible domain of $\R^2$.
Let $H: \R^+\to\R^+$ be a continuous, increasing function such that 
$$
\int_0^1 \frac{H(t)}{t}dt<+\infty\,.
$$
Let 
$$
\mathcal{F}(v)=\sum_{i=1}^2\int_{\Omega}H(d_1(x,\partial \Omega))d\left|D{\frac{\partial v}{\partial x_i}}\right|\,.
$$
Then the variational problem
\begin{equation}\label{variational_problem_intro}
\inf\{\mathcal{F}(v), v\in \mathcal{E}(\Omega)\}
\end{equation}
has  a solution.
\end{theorem}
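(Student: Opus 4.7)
The natural approach is the direct method of the calculus of variations, but implemented locally in order to overcome the potentially fractal accumulation of the jump set of $Dv_n$ near $\partial\Omega$.

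\textbf{Step 1 (compactness of a minimizing sequence).} Let $(v_n) \subset \mathcal{E}(\Omega)$ be a minimizing sequence. The identity $|\partial v_n/\partial x_i|=1$ a.e.\ together with the zero boundary condition yields a uniform bound in $W^{1,\infty}_0(\Omega)$, so Arzel\`a--Ascoli produces a subsequence with $v_n \to v$ uniformly on $\overline{\Omega}$, with $v\in W^{1,\infty}_0(\Omega)$. For each compact $K \subset \Omega$ set $\delta_K := \min_K d_1(\cdot,\partial\Omega) > 0$; since $H$ is increasing and positive, one has $H(d_1(\cdot,\partial\Omega)) \geq H(\delta_K) > 0$ on $K$, hence
\[
|D(\partial v_n/\partial x_i)|(K) \leq \frac{\mathcal{F}(v_n)}{H(\delta_K)}
\]
is bounded in $n$. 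Combined with $\|\partial v_n/\partial x_i\|_{\infty}\leq 1$, Ambrosio's $SBV$ compactness theorem yields, on each such $K$ and up to a further extraction, a limit $w_i^K \in SBV(K)$ with $\partial v_n/\partial x_i \to w_i^K$ in $L^1(K)$. A diagonal extraction along an exhaustion $K_m \nearrow \Omega$ produces a limit $w_i \in SBV_{loc}(\Omega)$ with $\partial v_n/\partial x_i \to w_i$ in $L^1_{loc}(\Omega)$; comparison with the distributional derivative of the uniform limit identifies $w_i = \partial v/\partial x_i$. A pointwise extraction then gives $|\partial v/\partial x_i|=1$ a.e., so $v\in\mathcal{E}(\Omega)$.

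\textbf{Step 2 (lower semicontinuity).} Since $\partial v_n/\partial x_i$ is $\pm1$-valued, its distributional derivative is purely singular, $|D(\partial v_n/\partial x_i)|=2\mathcal{H}^1\lfloor J_{\partial v_n/\partial x_i}$, and the same holds for $v$. Fix any open $U$ with $\overline{U}\subset\Omega$; on $\overline{U}$ the weight $H(d_1(\cdot,\partial\Omega))$ is continuous and strictly positive. Extracting a weak-$*$ limit of the Radon measures $|D(\partial v_n/\partial x_i)|$ on $\overline{U}$, bounding it from below by $|D(\partial v/\partial x_i)|$ via Ambrosio's $SBV$ lower semicontinuity, and testing against the continuous weight $H\circ d_1$ gives
\[
\int_U H(d_1(x,\partial\Omega))\,d|D(\partial v/\partial x_i)| \leq \liminf_{n\to\infty}\int_U H(d_1(x,\partial\Omega))\,d|D(\partial v_n/\partial x_i)| \leq \liminf_{n\to\infty}\mathcal{F}(v_n).
\]
Letting $U$ exhaust $\Omega$ and invoking monotone convergence on the left-hand side, then summing over $i=1,2$, one concludes $\mathcal{F}(v)\leq \liminf_n \mathcal{F}(v_n)$, so $v$ realizes the infimum.

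The genuine obstacle is that the $SBV$ norm of $\partial v_n/\partial x_i$ need not be uniformly bounded on all of $\Omega$, since the jump set can accumulate fractally near $\partial\Omega$; hence global $SBV$ compactness is unavailable and one must work on compact subsets. This is exactly why the weight $H$ is required to vanish at the boundary, and why the Dini-type integrability $\int_0^1 H(t)/t\, dt < \infty$ enters: it ensures that the class $\mathcal{E}(\Omega)$ contains a competitor with finite $\mathcal{F}$ (constructed, for a compatible $\Omega$, through an adapted pyramidal covering whose boundary jumps have weighted mass controlled precisely by that integral). The strict positivity of $H$ on each compact subset of $\Omega$ is then what allows the compactness and lower semicontinuity arguments to be executed locally and lifted to $\Omega$ via an interior exhaustion.
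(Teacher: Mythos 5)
Your Steps 1 and 2 reproduce, essentially verbatim, the second half of the paper's argument: the authors also exhaust $\Omega$ by the sets $B_m=\{x\in\Omega: d_1(x,\partial\Omega)>1/m\}$, use the positivity of $H(d_1(\cdot,\partial\Omega))$ there to get a uniform $BV(B_m)$ bound, apply the $SBV$ compactness theorem with $f\equiv 1$ (legitimate because the jumps of a $\pm1$-valued function equal $\pm2$), identify the limit with $\partial v^\infty/\partial x_i$, and pass to the limit in the functional by semicontinuity on each $B_m$ followed by a supremum over $m$. That part of your proposal is correct and is the same route as the paper.

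The genuine gap is that the entire argument is vacuous unless one first exhibits some $v\in\mathcal{E}(\Omega)$ with $\mathcal{F}(v)<+\infty$: your compactness step already uses that a minimizing sequence satisfies $\mathcal{F}(v_n)\leq C<\infty$, which presupposes a finite infimum. You dispose of this in one parenthetical sentence (``constructed, for a compatible $\Omega$, through an adapted pyramidal covering whose boundary jumps have weighted mass controlled precisely by that integral''), but this is precisely the part the authors identify as the most difficult, and it occupies most of their Section 4. Concretely, they decompose $\overline{\Omega}$ into a polygon $P\in\Pfin$ (where Theorem \ref{cha-cro} already gives a finite-energy solution) and finitely many rotated triangular domains $T_j$; on each $T_j$ they build a recursive covering by squares via the operators $q,u,r$ of Definition \ref{definition_covering}, define $v$ as the $l^1$-distance to the boundary of each square, prove that at most $c(n+1)$ squares meet the layer $L_n=\{1/(n+1)<d_\infty(x,\gamma)\leq 1/n\}$ (Lemma \ref{lemma_number_squares}), and then estimate separately the weighted length of the sides and of the two families of diagonals (Lemmata \ref{lemma-S}--\ref{lemma-D-}). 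The estimate for the diagonals parallel to $\ell_-$ is itself delicate, requiring a case analysis on the slope $h'$ of the boundary curve and Schl\"omilch's condensation criterion; it is also the place where the compatibility hypothesis on $\Omega$ (finitely many connected components of $N^{-1}(E)$) is genuinely used, whereas your proposal never invokes it. Without this construction, or a substitute for it, the theorem is not proved.
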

We point out that the most difficult part of the proof of Theorem \ref{enunciato_intro} consists in showing 
that the variational problem (\ref{variational_problem_intro}) is well-posed, that is,
defining a function $v\in \mathcal{E}(\Omega)$ such that $\mathcal{F}(v)$ is finite. 
This will be done in Proposition \ref{functional_finite}.
In section 3 we sketch the proof of Proposition \ref{functional_finite} in the special case $\Omega=(0,1)\times (0,1)$ where the computations are much easier than in the general case; 
we motivate there the hypothesis on $H$.
After proving that $\mathcal{F}$ is finite for at least one function in $\mathcal{E}(\Omega)$, we show the existence of a minimizer  using the direct methods of the calculus of variations, 
as in \cite{Champion-Croce}.
We finally remark that the generalization of our result to higher dimensions
is not immediate, because in that case Proposition \ref{functional_finite} 
is much more complicated. This is why we state our main result in $\R^2$. However, we plan to address the problem in higher dimensions in a future work. 
 
\medskip
\noindent{\bf Acknowledgements} We would like to thank T. Champion for some useful discussions. 
Part of this work was completed during a visit of the first author to
Seconda Universit\`a di Napoli and of the second author to the
Hokkaido University whose hospitality is gratefully  
acknowledged.

\section{Notations and preliminaries}
This section is divided into two parts. In the first one we will fix
the hypotheses on $\Omega$ and the related notations. The second one
is devoted to some preliminary results on functions of bounded
variation.

Throughout, given a real number $t$ we will use the notation $[t]$ do
denote its integer part. Given a continuous function $f: [a,b]\to \R$, $f\in C^1((a,b))$ with $f'(t)<0$ for every $t\in (a,b)$, we  set
\[
T_{f}:=\{(s,t)\in \R^2: a\leq s \leq b, f(b)\leq t\leq f(s)\}.
\]
We will call $T_f$ a \emph{triangular domain}. The class of all  triangular
domains will be denoted by $\mathcal{T}$. We will write $T$ instead of
$T_f$ if the function $f$ will be clear by
the context.

In the sequel a special role is played by the family of lines in the plane with slope $\pm 1$. 
We will denote by $\ell_{\pm}$ the line with slope $\pm 1$,  passing through $(0,0)$. Let 
$\Lin$ be the set of lines parallel to $\ell_{+}$ or $\ell_{-}$.
We define
$$\Seg:=\left\{ [x,y]  \;: [x,y]\subset \ell \;,\;\;\ell \in \Lin  \;  \right\}\,,
$$
where $[x,y]$ stands for the segment
joining the two points $x=(x_{1},x_{2})$, $y=(y_1,y_2)$.  
    We will denote by $\mathcal{P}$ the class of bounded Lipschitz domains whose 
    boundary can be written as an at most countable union of segments 
    lying in $\Seg$;  $\Pfin \subset \mathcal{P}$ will be the class of domains 
    whose boundary can be written as a finite union of segments in $\Seg$;  
    finally,  $\Pinf=\mathcal{P} \setminus \Pfin$. 
 For a given set $A\subset \R^{2}$, with $\interior{A}$ we denote the
 relative interior of $A$ with respect to the topology induced by the euclidean metric on $\R^{2}$.
Let 
$$
\Rot = \left(
  \begin{array}{cr}
    \frac{1}{\sqrt{2}} & -\frac{1}{\sqrt{2}} \\ \frac{1}{\sqrt{2}} & \frac{1}{\sqrt{2}}
  \end{array}
\right)
\,\,\,\textnormal{and}\,\,\,
\Ref = \left(
  \begin{array}{rc}
   -1 & 0 \\ 0 & 1
  \end{array}
\right)
$$
be the $\frac{\pi}{4}$ counterclockwise  rotation and the reflexion with 
respect to the vertical axis respectively.
We define 
$$
\mathfrak{T}=\{\Rot^{2k+1}(T), \Ref(\Rot^{2k+1}(T)), k=0, 1, 2, 3, T
\in \mathcal{T}\}\,.
$$ 
 Let us consider the set  $E:=\{ \nu\in S^{1}\,:\, |\nu_{1}|=|\nu_{2}| \}$. 
    For a given rectifiable curve of class $C^{1}$, $\gamma \subset \R^{2}$, 
    we denote by $N:\gamma \mapsto S^{1}$ the Gauss map.
    \begin{definition}\label{def:01}
   We say that $\gamma$ 
    is an \emph{admissible boundary curve} if 
    the set $N^{-1}(E)\subset \gamma$ has a finite number of connected components. 
    We will denote by $\Gamma$ the class of admissible boundary curves. 
   A bounded connected Lipschitz set $\Omega\subset \R^{2}$ will be 
   called \emph{compatible domain} if its boundary $\partial \Omega$ 
   is the union of a finite number of admissible boundary curves.
     \end{definition}
     We recall that an open bounded set $\Omega\subset \R^2$ is Lipschitz 
     if for every $p \in \partial \Omega$ there exist a radius $r>0$ and a map $h_{p}: B_r(p)\to B_1(0)$ such that
     $h_p$ is a bijection, $h_p, h_p^{-1}$ are Lipschitz functions, $h_p(\partial \Omega \cap B_r(p))=Q_0$ and
$h_p(\Omega \cap B_r(p))=Q_+$, where $B_r(p)=\{x \in \R^2: ||x-p||\leq r\}$, $Q_0=\{x=(x_1,x_2) \in B_1(0): x_2=0\}$ and
$Q_+=\{x=(x_1,x_2) \in B_1(0): x_2>0\}$. We remark that if $\Omega$ is Lipschitz, 
then $\mathcal{H}^1(\partial \Omega)$ is finite. 

We observe that any domain in $\Pfin$ is compatible, since any segment 
$\sigma \in \Seg$ is an admissible boundary curve, being 
$N^{-1}(E)= \interior{\sigma}$ connected.
Any convex, open, bounded, Lipschitz subset of $\R^2$  such that $\partial \Omega$ 
is composed of a finite number of $C^1$ curves is compatible. 
A polygon is also a compatible domain.
\begin{remark}\label{covering_net}
If $\Omega$ is a compatible domain, then it can be covered by  a
polygonal set in $\Pfin$ and a finite number of 
domains in $\mathfrak{T}$ with mutually disjoint interior, i.e.,
$$
\overline{\Omega}=\overline{P}\cup \bigcup_{j=1}^N \mathcal{W}_j(T_j),
$$
where $P\in \mathcal{P}^f$, $N\in \N$, $T_j \in \mathcal{T}$
and $\mathcal{W}_j$ is a linear transformation of the form 
$\Rot^{2k_j+1}$ or $\Ref(\Rot^{2k_j+1})$, with  $k_j\in\{0, 1, 2, 3\}$.
\end{remark}

As already explained in the introduction, in this article we minimize a weighted measure 
of the  discontinuity set of the gradient of  the solutions to problem
(\ref{SEE}).

To do that, we will use the definition and some results on the spaces
$BV(\Omega)$ and $SBV(\Omega)$ of functions of bounded variation that
we recall here. For simplicity we will restrict ourselves to the case
where $\Omega$ is a subset of $\R^2$. We refer to \cite{A-M, ambrosio-fusco-pallara, evans-gariepy,federer} for further details.

\begin{definition}[$BV, SBV$ function]
A $BV(\Omega)$ function is an $L^1(\Omega)$ function such that its distributional gradient is a finite Radon measure.
A $SBV(\Omega)$ function is a $BV(\Omega)$ function such that its gradient can be decomposed in
the following way:
\begin{eqnarray*}
Dw & = & D^a w + D^j w \\
& = & \nabla w \, \mathcal{L}^2+(w^+-w^-)\nu_w\,\mathcal{H}^{1}\lfloor \Saut_w
\,\, = \,\,
\nabla w\mathcal{L}^2+[w]\nu_w\mathcal{H}^{1}\lfloor \Saut_w
\end{eqnarray*}
where $D^a w$ is absolutely continuous with respect to
the Lebesgue measure $\Leb^2$ in $\R^2$ with density $\nabla w$, $D^j w = [w]\nu_w\mathcal{H}^{1}\lfloor \Saut_w$
is the {\it jump} part of $Dw$,
$\Haus^{1}$ is the one-dimensional Hausdorff measure,
$w^+$ and $w^-$ denote the upper and lower approximate limits of $w$, $J_w$ the jump set of $w$ and $\nu_w$ its generalized normal. 
\end{definition}    

We shall handle the solutions $v$ of problem (\ref{SEE}) belonging to 
$$
\mathcal{E}(\Omega)=\left\{v\in \mathfrak{S}(\Omega): v_{x_i} \in SBV_{loc}(\Omega), i=1,2\right\}
$$  where  $v_{x_i}$ stands for $\frac{\partial v}{\partial x_i}, i=1,2$.
Therefore
\begin{equation}\label{DVrepresentation}
D\left(v_{x_i}\right)
=2 \, \nu_{v_{x_i}} \,\Haus^{1} \lfloor \Saut_{v_{x_i}}
\quad on \,\, \omega\quad \forall\, i=1,2\,
\end{equation}
for any open subset $\omega \subset \overline{\omega}\subset \om$.

We recall moreover the following result:
\begin{lemma}\label{federer}
If $P: \R^2\to \R$ is the projection $(x_1,x_2)\to x_2$,
and $E$ is a measurable set of $\R^2$ then
$$
\int_{\R}\mathcal{H}^0(E\cap P^{-1}\{y\})dy\leq \mathcal{H}^{1}(E).
$$
\end{lemma}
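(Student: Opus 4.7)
The plan is to establish the inequality by the classical Eilenberg coarea-type argument tailored to the $1$-Lipschitz projection $P$. The key is to work with the $\delta$-pre-Hausdorff quantities $\mathcal{H}^s_\delta$---infima over coverings by sets of diameter at most $\delta$---and to pass to the limit $\delta\to 0$ at the very end, when the $s=0$ quantity coincides with the genuine counting measure.

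Fix $\delta,\epsilon>0$ and choose, by definition of $\mathcal{H}^1_\delta$, a countable cover $\{E_k\}$ of $E$ satisfying $\mathrm{diam}(E_k)\le \delta$ and $\sum_k \mathrm{diam}(E_k)\le \mathcal{H}^1_\delta(E)+\epsilon$. For each $y\in\R$ and each $k$, the slice $E_k\cap P^{-1}\{y\}$ is non-empty exactly when $y\in P(E_k)$, and in that case it has diameter $\le\delta$, so it can be covered by a single set of diameter $\le \delta$; hence $\mathcal{H}^0_\delta(E_k\cap P^{-1}\{y\})\le \chi_{P(E_k)}(y)$. By countable subadditivity of $\mathcal{H}^0_\delta$, I obtain $\mathcal{H}^0_\delta(E\cap P^{-1}\{y\})\le \sum_k \chi_{P(E_k)}(y)$. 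Integrating in $y$ by Tonelli and using $|P(E_k)|\le \mathrm{diam}(E_k)$ (because $P$ is $1$-Lipschitz) gives
$$\int_{\R}\mathcal{H}^0_\delta(E\cap P^{-1}\{y\})\,dy\le \sum_k |P(E_k)|\le \mathcal{H}^1_\delta(E)+\epsilon.$$
Letting $\epsilon\to 0$ yields the $\delta$-version of the claimed inequality. Finally, sending $\delta\to 0$ and invoking monotone convergence---since $\mathcal{H}^0_\delta(A)\nearrow \mathcal{H}^0(A)$ (the true cardinality) and $\mathcal{H}^1_\delta(E)\nearrow \mathcal{H}^1(E)$---concludes the proof.

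The only delicate point is the transition from $\mathcal{H}^0_\delta$, which counts the minimal number of $\delta$-small sets required to cover a slice, to $\mathcal{H}^0$, which is the actual cardinality of that slice: the slice may contain many more points than the number of covering pieces when $\delta$ is large, but this gap closes as $\delta\to 0$. The estimate ``one covering piece per non-empty slice'' is precisely the mechanism by which the one-dimensional length control on $E$ gets converted into a bound on the number of fibre intersection points in the limit. Measurability of $y\mapsto \mathcal{H}^0(E\cap P^{-1}\{y\})$ is handled either by passing through the upper integral throughout or, for Borel $E$, by the standard approximation.
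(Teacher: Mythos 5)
Your argument is correct. The paper itself gives no argument at all: it simply invokes Theorem 2.10.25 of Federer's book (the Eilenberg inequality) with $f=P$, $k=0$, $m=1$. What you have written is, in effect, a self-contained proof of that cited theorem in the special case at hand, and it is the standard one: cover $E$ by sets $E_k$ of diameter $\le\delta$ almost realizing $\mathcal{H}^1_\delta(E)$, observe that each $E_k$ contributes at most one covering piece to each non-empty slice so that $\mathcal{H}^0_\delta\bigl(E\cap P^{-1}\{y\}\bigr)\le\sum_k\chi_{P(E_k)}(y)$, integrate using $|P(E_k)|\le\mathrm{diam}(E_k)$ (as $P$ is $1$-Lipschitz), and let $\delta\to 0$. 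The two genuinely delicate points are exactly the ones you flag: $P(E_k)$ need not be Lebesgue measurable, so one should either replace $\chi_{P(E_k)}$ by the indicator of the interval $[\inf P(E_k),\sup P(E_k)]$ or work with the upper integral throughout (which is how Federer states the result); and the passage from $\mathcal{H}^0_\delta$ to the counting measure $\mathcal{H}^0$ requires monotone convergence, which does hold for upper integrals of increasing sequences of non-negative functions. With those caveats made explicit, your proof is complete and has the advantage of making the lemma independent of the external reference, at the modest cost of a page of standard measure theory that the paper chose to outsource.
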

\begin{proof}
It is sufficient to apply Theorem 2.10.25 of \cite{federer}
with $f=P$, $X=\R^2$, $Y=\R$, $k=0$ and $m=1$.
\end{proof}

We end this section with some results on compactness and 
lower semi-continuity in the space of  functions of bounded variation.
\begin{definition}[$BV$ norm]
The $BV$ norm of a $BV(\Omega)$ function $w$ is defined by
$$
\norm{w}_{BV(\Omega)}=\norm{w}_{L^1(\Omega)}+|Dw|(\Omega)\,.
$$
\end{definition}

\begin{definition}[weak* convergence in $BV$]
Let $(u_n)_n, u \in BV(\Omega)$. We say that $(u_n)_n$ weakly* converges to $u$ in $BV(\Omega)$ if
$u_n \to u$ in $L^1(\Omega)$ and the measures $Du_n$ weakly* converge to the measure $Du$ in
$\mathcal{M}(\Omega, \R^2)$, that is,
$$
\lim\limits_{n \to \infty}\int_{\Omega}\varphi dD u_n=
\int_{\Omega}\varphi dD u \qquad \forall\,\varphi \in C_0(\Omega)\,.
$$
\end{definition}


\begin{theorem}[compactness for $SBV$ functions]\label{compactnessSBV}
Let $\Omega$ be a bounded open subset of $\R^2$ with $\mathcal{H}^{1}(\partial \Omega)<+\infty$.
Let $(u_n)_n$ be a sequence of functions in $SBV(\Omega)$ and assume that:
\\
$i)$ the functions $u_n$ are uniformly bounded in $BV(\Omega)$;
\\
$ii)$ the gradients $\nabla u_n$ are equi-integrable;
\\
$iii)$ there exists a function $f:[0,\infty)\to [0,\infty]$ such that
$f(t)/t \to \infty$ as $t \to 0^+$ and
$$
 \int_{\Saut_{u_n}}f([u_n])d\mathcal{H}^{1}
\leq C<\infty\quad \forall n \in \N.
$$
Then there exists a subsequence $(u_{n_k})_k$ and a function $u \in SBV(\Omega)$ such that
$u_{n_k}\to u$ weakly*  in $BV(\Omega)$,
with the Lebesgue and jump parts of the
derivatives converging separately, i.e., $D_a u_{n_k}\to D_a u$ and $D_j u_{n_k}\to D_j u$
weakly* in $\mathcal{M}(\om,\R^2)$.
\end{theorem}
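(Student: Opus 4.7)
The plan is to deduce the result from the classical compactness and closure theorem for $SBV$ functions due to Ambrosio (cf.\ \cite{ambrosio-fusco-pallara}), whose strategy I now outline in three main ingredients.

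First, from hypothesis (i), the sequence $(u_{n})$ is bounded in $BV(\Omega)$, so by the standard $BV$ compactness on Lipschitz domains (the assumption $\mathcal{H}^{1}(\partial \Omega) < +\infty$ guarantees the necessary compact embedding), there exist a subsequence $(u_{n_{k}})_{k}$ and a function $u \in BV(\Omega)$ with $u_{n_{k}} \to u$ strongly in $L^{1}(\Omega)$ and $Du_{n_{k}} \rightharpoonup^{*} Du$ weakly$^{*}$ in $\mathcal{M}(\Omega, \R^{2})$.

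Second, I would pass to the limit in the Lebesgue and jump parts separately. The equi-integrability hypothesis (ii), combined with the Dunford--Pettis theorem, yields (along a further subsequence) $\nabla u_{n_{k}} \rightharpoonup g$ weakly in $L^{1}(\Omega, \R^{2})$ for some $g \in L^{1}(\Omega, \R^{2})$, so that $D^{a} u_{n_{k}} \rightharpoonup^{*} g\,\mathcal{L}^{2}$. For the jump parts, the superlinear behaviour $f(t)/t \to +\infty$ as $t \to 0^{+}$ (a de la Vall\'ee Poussin type condition) prevents jumps from concentrating on sets of vanishing $\mathcal{H}^{1}$ measure; together with the uniform bound in (iii), this gives tightness of the total variations $|D^{j} u_{n_{k}}|$, so along a further diagonal subsequence $D^{j} u_{n_{k}} \rightharpoonup^{*} \mu$ weakly$^{*}$ in $\mathcal{M}(\Omega, \R^{2})$ for some measure $\mu$ which is concentrated on a countably $\mathcal{H}^{1}$-rectifiable set.

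The main obstacle is the identification $Du = g\,\mathcal{L}^{2} + \mu$, i.e.\ ruling out any Cantor part in the limit and hence ensuring $u \in SBV(\Omega)$. This is the heart of Ambrosio's closure theorem and is handled by slicing: on $\mathcal{L}^{1}$-a.e.\ line parallel to the coordinate axes, the traces of $u_{n_{k}}$ are one-dimensional $SBV$ functions whose absolutely continuous and jump parts inherit bounds analogous to (i)--(iii) (using Lemma \ref{federer} to control the number of jumps via $\mathcal{H}^{1}$ of the jump set). In dimension one an elementary argument based on the superlinear growth of $f$ shows that these two contributions pass to the limit separately without creating a Cantor part. Reintegrating in the transverse variable via Fubini then yields $u \in SBV(\Omega)$ with $D^{a} u = g\,\mathcal{L}^{2}$ and $D^{j} u = \mu$, which is precisely the asserted separate weak$^{*}$ convergence of the Lebesgue and jump parts.
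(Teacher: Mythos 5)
The paper does not prove this statement: it is quoted verbatim as a known preliminary, namely Ambrosio's compactness theorem for $SBV$ (see \cite{ambrosio-fusco-pallara}, where it appears together with the closure theorem), so there is no in-paper argument to compare yours against. Your outline is a faithful sketch of the standard proof of that classical result: $BV$ compactness for the subsequence, Dunford--Pettis together with equi-integrability for the absolutely continuous parts, the de la Vall\'ee Poussin--type condition in $(iii)$ to control the jump parts, and slicing to rule out a Cantor part in the limit. In that sense your route is exactly the one the paper implicitly relies on.

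Two caveats. First, your sketch defers the genuinely hard step: the one-dimensional closure argument showing that, on a.e.\ slice, the superlinear bound $\int_{J} f([u_n])\,d\mathcal{H}^0 \leq C$ forces the limit to have no Cantor part and forces $D^a$ and $D^j$ to converge separately. Saying ``an elementary argument based on the superlinear growth of $f$'' names the right mechanism but does not supply it; a complete proof would have to carry out the lower semicontinuity/truncation argument on slices and then reassemble via the $BV$ slicing theory (your appeal to Lemma \ref{federer} is in the right spirit but is not by itself the slicing characterization of $SBV$ that is actually needed). Second, the claim that $\mathcal{H}^1(\partial\Omega)<+\infty$ ``guarantees the necessary compact embedding'' of $BV(\Omega)$ into $L^1(\Omega)$ is not accurate as stated: finiteness of $\mathcal{H}^1(\partial\Omega)$ alone does not make $\Omega$ an extension domain. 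The standard statement obtains compactness in $L^1_{loc}(\Omega)$ for any bounded open set, which suffices for the weak* conclusion; if you want genuine $L^1(\Omega)$ convergence you need a regularity hypothesis on $\partial\Omega$ beyond finite $\mathcal{H}^1$ measure. Neither point is a wrong turn, but both would need to be filled in for the argument to stand on its own rather than as a pointer to the literature.
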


\begin{theorem}[semicontinuity in $BV$]\label{semicontinuityBV}
Let $\Omega$ be a bounded open subset of $\R^2$.
Let $(u_n)_n$ be a sequence of functions in $BV(\Omega)$ such that $u_n \to u$ 
weakly* in $BV(\Omega).$ Then
$$
\int_\om f(x) d|D_j u|(x) \leq \liminf_{n \to \infty}\int_\om f(x) d|D_j u_{n}|(x)
$$
for any non-negative continuous function $f: \om \to [0,+\infty[\,$.
\end{theorem}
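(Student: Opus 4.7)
The plan is to reduce Theorem \ref{semicontinuityBV} to a Portmanteau-style argument: a lower semicontinuity of the jump variation measure on open sets, combined with a layer-cake decomposition of the continuous nonnegative weight $f$. Since the conclusion only involves a $\liminf$, I may always pass to a subsequence realising it without changing the claim.

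The weak* $BV$-convergence bounds $|Du_n|(\om)$ uniformly, hence also bounds $|D_j u_n|(\om)$. Up to a further subsequence I can therefore extract weak* limits in $\mathcal{M}(\om)$ and $\mathcal{M}(\om,\R^2)$:
\[
|D_j u_n| \stackrel{*}{\rightharpoonup} \lambda, \qquad D_j u_n \stackrel{*}{\rightharpoonup} \mu,
\]
with $|\mu| \leq \lambda$ as nonnegative Radon measures on $\om$ (this is standard lower semicontinuity of the variation norm under weak* convergence of vector-valued measures). The crux of the argument is to identify $\mu$ with the jump part $D_j u$ of the weak* limit, i.e.\ that the decomposition $Du_n = D_a u_n + D_j u_n$ passes to the limit separately. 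This is the main obstacle: weak* convergence in $BV$ alone does not suffice in general (smooth mollifications of a pure-jump function have vanishing jump part yet converge weakly* to it), so the lemma has to be invoked in a setting where this separate convergence is available, precisely as supplied by Theorem \ref{compactnessSBV}. Granted this identification, $|D_j u| = |\mu| \leq \lambda$ as measures on $\om$.

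Finally I would conclude by a layer-cake computation. For every continuous $f: \om \to [0,+\infty)$,
\[
\int_\om f \, d|D_j u| \;\leq\; \int_\om f \, d\lambda \;=\; \int_0^{+\infty} \lambda(\{f > t\})\, dt.
\]
Because $f$ is continuous, each sublevel $\{f > t\}$ is open in $\om$, so the weak* convergence $|D_j u_n| \stackrel{*}{\rightharpoonup} \lambda$ gives the Portmanteau bound $\lambda(\{f > t\}) \leq \liminf_n |D_j u_n|(\{f > t\})$. Fatou's lemma on $(0,+\infty)$ then yields
\[
\int_0^{+\infty} \lambda(\{f>t\})\,dt \;\leq\; \liminf_n \int_0^{+\infty} |D_j u_n|(\{f>t\})\,dt \;=\; \liminf_n \int_\om f\, d|D_j u_n|,
\]
which is the desired inequality.
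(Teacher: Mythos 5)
The paper does not prove this theorem: it is stated in the preliminaries as a recalled fact from the $BV$ literature, so there is no internal argument to compare yours against. Your analysis is nevertheless essentially the right one, and you have put your finger on the one genuine issue. As literally stated, with weak* convergence in $BV(\om)$ as the only hypothesis, the theorem is false: mollifying a function $u$ with a nontrivial jump part produces smooth $u_n$ converging to $u$ weakly* in $BV$ with $D_j u_n\equiv 0$, so the right-hand side vanishes while the left-hand side does not. The missing ingredient is precisely the separate convergence $D_j u_n \stackrel{*}{\rightharpoonup} D_j u$ in $\mathcal{M}(\om,\R^2)$, which in the paper's application is supplied by the $SBV$ compactness theorem (Theorem \ref{compactnessSBV}) immediately before Theorem \ref{semicontinuityBV} is invoked. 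Read as a lemma with that extra hypothesis, the statement is correct; your decision to ``grant the identification'' is therefore not a private gap in your argument but a defect of the statement itself, and you were right to flag it rather than pretend to derive it.

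Granting $D_j u_n \stackrel{*}{\rightharpoonup} D_j u$, the rest of your argument is sound: the uniform mass bound coming from weak* convergence lets you extract $\lambda$ with $|D_j u_n|\stackrel{*}{\rightharpoonup}\lambda$ and $|D_j u|\leq\lambda$, and the layer-cake, Portmanteau and Fatou steps correctly give $\int_\om f\,d\lambda\leq\liminf_n\int_\om f\,d|D_j u_n|$ since each superlevel set $\{f>t\}$ is open. Two simplifications are worth noting. First, once $|D_j u|\leq\lambda$ is known, the inequality $\int_\om f\,d\lambda\leq\liminf_n\int_\om f\,d|D_j u_n|$ is the standard lower semicontinuity of $\mu\mapsto\int f\,d\mu$ on nonnegative Radon measures under weak* convergence (write $f$ as an increasing limit of functions in $C_c(\om)$), so the layer-cake decomposition is not needed. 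Second, one can dispense with $\lambda$ and with subsequences altogether: by the duality formula $\int_\om f\,d|D_j u|=\sup\bigl\{\int_\om \varphi\cdot dD_j u \,:\, \varphi\in C_c(\om,\R^2),\ |\varphi|\leq f\bigr\}$, and for each such $\varphi$ one has $\int_\om \varphi\cdot dD_j u=\lim_n\int_\om \varphi\cdot dD_j u_n\leq\liminf_n\int_\om f\,d|D_j u_n|$; taking the supremum over $\varphi$ gives the claim directly.
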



\section{Statement of the main result and remarks}\label{Main result} 
As already explained in  the introduction, our strategy to select some
particular or special solutions to \eqref{SEE} is based on a
variational method, in other words we search for minimizers of a suitable 
and meaningful functional defined on  $\mathcal{E}(\Omega)$. 
We will motivate the choice of the functional and the conditions imposed on the domain $\Omega$ with the 
aid of some enlightening examples.
Our main result is the following
\begin{theorem}\label{main_thm}
Let $\Omega$ be a compatible domain and $H: \R^+\to \R^+$  be an increasing, continuous function such that
\begin{equation}\label{hyp_H_crescita}
\int_0^1\frac{H(t)}{t}\,dt<\infty\,.
\end{equation}
Let
\[
\mathcal{F}(v)=\sum_{i=1}^2\int\limits_{\Omega}H(d_1(x,\partial \Omega))d(|D\, v_{x_i}|)(x)\,.
\]
Then the variational problem 
\begin{equation}\label{variational_problem}
\inf\{\mathcal{F}(v)\,,v\in \mathcal{E}(\Omega)\}
\end{equation}
has a solution.
\end{theorem}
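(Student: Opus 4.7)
The plan is to split the proof into two tasks: (a) producing one competitor $v_0\in\mathcal{E}(\Omega)$ with $\mathcal{F}(v_0)<+\infty$ so that the infimum is finite, and (b) running the direct method of the calculus of variations, using the compactness and lower semicontinuity results of Theorems \ref{compactnessSBV} and \ref{semicontinuityBV}. As the authors warn, step (a) is the real obstacle, since arbitrary pyramidal Vitali coverings produce a cascade of jumps accumulating at $\partial\Omega$ whose total $\mathcal{H}^1$ measure is typically infinite.

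For step (a), I would use the decomposition of $\overline{\Omega}$ from Remark \ref{covering_net} as $\overline{P}\cup\bigcup_j\mathcal{W}_j(T_j)$, and construct a solution piece by piece: on the polygonal part $P\in\Pfin$ the classical pyramidal construction of \cite{cellina,friesecke} produces a solution whose gradient has jump set contained in finitely many segments at positive distance from a thin collar of $\partial\Omega$, so that piece contributes a finite amount to $\mathcal{F}$. On each triangular domain $T_{j}$ (which is the hard case, sketched by the authors for the unit square in Section 3) I would build a nested cascade of pyramids, constructing at each generation $n$ a layer of pyramids of size comparable to $2^{-n}$ whose base sits at distance roughly $2^{-n}$ from the boundary, choosing the orientations so that gradients stay in $\{\pm 1\}^2$ and are compatible across generations. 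The jump set of the resulting $v_{0,x_i}$ has, on the $n$-th layer, $\mathcal{H}^1$-measure of order $1$ lying in a strip where $d_1(\cdot,\partial\Omega)\sim 2^{-n}$, so that
\[
\mathcal{F}(v_0)\leq C\sum_{n\geq 0}H(2^{-n})\leq C\hspace{1pt}'\int_0^1\frac{H(t)}{t}\,dt<+\infty,
\]
which is precisely the role played by the Dini-type hypothesis \eqref{hyp_H_crescita}. The main technical point here is to verify that the local constructions on the different $\mathcal{W}_j(T_j)$ and on $P$ glue into a globally admissible function in $\mathcal{E}(\Omega)$; compatibility of $\Omega$ is what guarantees that the interfaces have normals in $E$ and thus the glued function still satisfies \eqref{SEE} a.e.\ with only $SBV_{\mathrm{loc}}$ derivatives.

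For step (b), fix a minimizing sequence $(v_n)\subset\mathcal{E}(\Omega)$ with $\mathcal{F}(v_n)\leq\mathcal{F}(v_0)+1$. Every $v_n$ is a solution of \eqref{SEE}, hence $1$-Lipschitz in the $l^\infty$ norm with $v_n=0$ on $\partial\Omega$; consequently $(v_n)$ is uniformly bounded in $W^{1,\infty}(\Omega)$ and $\|\nabla v_{n,x_i}\|_{L^\infty}=0$ while $|v_{n,x_i}|=1$ a.e., giving item (ii) of Theorem \ref{compactnessSBV} for free. Moreover by \eqref{DVrepresentation} every jump $[v_{n,x_i}]$ equals $2$, so condition (iii) of Theorem \ref{compactnessSBV} is trivially satisfied on any $\omega\Subset\Omega$ with any superlinear $f$. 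To get the uniform bound (i) on $|Dv_{n,x_i}|(\omega)$ I use that, by continuity and strict positivity of $H$ on $(0,+\infty)$ (forced by monotonicity and $H\not\equiv 0$), there is $c_\omega>0$ with $H(d_1(x,\partial\Omega))\geq c_\omega$ for $x\in\omega$, so
\[
c_\omega\,|Dv_{n,x_i}|(\omega)\leq \int_\Omega H(d_1(x,\partial\Omega))\,d|Dv_{n,x_i}|\leq \mathcal{F}(v_n).
\]
An exhaustion $\omega_k\Subset\Omega$ together with a diagonal extraction yields a subsequence $v_{n_k}\to v$ weak-$*$ in $BV_{\mathrm{loc}}$ with separate convergence of absolutely continuous and jump parts; the $L^\infty$ bound and a.e.\ convergence of $\nabla v_{n_k}$ pass the eikonal constraint and the boundary condition to $v$, so $v\in\mathcal{E}(\Omega)$.

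Finally, because $x\mapsto H(d_1(x,\partial\Omega))$ is continuous and non-negative on $\Omega$, Theorem \ref{semicontinuityBV} applied on each $\omega_k$ gives
\[
\int_{\omega_k} H(d_1(x,\partial\Omega))\,d|Dv_{x_i}|\leq\liminf_{k\to\infty}\int_{\omega_k} H(d_1(x,\partial\Omega))\,d|Dv_{n_k,x_i}|\leq\inf\mathcal{F}.
\]
Letting $k\to\infty$ and summing over $i=1,2$ by monotone convergence shows $\mathcal{F}(v)\leq\inf\mathcal{F}$, so $v$ is a minimizer. As noted, the delicate part is the explicit cascade construction in step (a); everything else is a standard compactness-plus-semicontinuity argument, close in spirit to the one in \cite{Champion-Croce}.
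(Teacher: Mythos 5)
Your overall architecture coincides with the paper's: split the proof into the construction of one competitor with finite energy (this is Proposition \ref{functional_finite} in the paper) and a direct-method argument, and your step (b) is essentially the paper's proof (the paper works on the sets $B_m=\{x:d_1(x,\partial\Omega)>1/m\}$ rather than a generic exhaustion, and recovers the eikonal constraint in the limit from $L^1$ convergence of $v^n_{x_i}$ rather than a.e.\ convergence of gradients, but these are cosmetic differences). One caveat: strict positivity of $H$ on $(0,+\infty)$ is \emph{not} forced by monotonicity together with $H\not\equiv 0$ (an increasing $H$ may vanish on an initial interval), so your lower bound $H(d_1(x,\partial\Omega))\geq c_\omega>0$ rests on an unstated assumption; the paper's constant $\alpha(m)$ has the same issue, so this is minor.

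The genuine gap is in step (a), which you correctly identify as the heart of the matter but then resolve by assertion. The claim that the cascade on a triangular piece can be organized into generations of squares of size comparable to $2^{-n}$, sitting at distance comparable to $2^{-n}$ from $\partial\Omega$, with total jump length $O(1)$ per generation, is exactly the flat-boundary computation of Example \ref{ex-square}; it does not transfer to a general admissible boundary curve, and proving the analogous bound is where all the work lies (Lemmata \ref{lemma-S}, \ref{lemma-D+}, \ref{lemma-D-} for the covering of Definition \ref{definition_covering}). Two specific obstructions are missing from your sketch. First, the sides of the squares are transversal to the boundary and cross many distance-layers $L_n=\{1/(n+1)<d_\infty(x,\gamma)\leq 1/n\}$, so they cannot be assigned a single distance scale; the paper instead counts the squares meeting each layer ($N_n\leq c(n+1)$, Lemma \ref{lemma_number_squares}) and bounds the length of each side's intersection with $L_n$ via the moduli of $h$ and $h^{-1}$, which forces a case analysis on the slope of the boundary and Riemann-sum estimates of $\int|h'|$ and $\int 1/|h'|$. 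Second, for the diagonals parallel to $\ell_-$, when $h'\approx-1$ the squares of generation $n$ have side between $(2+\eps)^{-n}$ and $(2-\eps)^{-n}$ rather than $2^{-n}$, and converting $\sum_n H\bigl(c(2-\eps)^{-n}\bigr)<\infty$ into a consequence of $\int_0^1 H(t)t^{-1}\,dt<\infty$ requires Schl\"omilch's condensation criterion, plus a separate Lagrange-theorem argument when $h'$ stays away from $-1$ and a localization step to reduce a general $C^1$ boundary to these two regimes. Without these estimates the inequality $\mathcal{F}(v_0)\leq C\sum_n H(2^{-n})$ is unproven, and with it stands or falls the well-posedness of the variational problem.
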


One could ask why we do not minimize
the simpler functionals
$$\dys \mathcal{H}^t\left(J_{v_{x_1}}
\cup J_{v_{x_2}}\right)\,,\,\, t\geq 1$$
over $\mathcal{E}(\Omega)$.
To give the answer we will distinguish the cases $t=1$ and $t>1$.
The first case was partially studied in \cite{Champion-Croce}, where
it was proved the following theorem for the  functional
$$
\Fold{\Omega}(v):=\sum_{i=1}^2\int_{\Omega}d(|D v_{x_i}|)(x)\,.
$$ 
\begin{theorem}\label{cha-cro}
Let $P \in \Pfin$.
Then the variational problem 
$$
\inf\{\Fold{P}(v), v\in \mathcal{E}(P)\}
$$
has a solution.
\end{theorem}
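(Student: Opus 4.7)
The plan is to apply the direct method of the calculus of variations. The argument splits into three steps: exhibiting an admissible competitor so that the infimum is finite; extracting a convergent subsequence from a minimizing sequence whose limit lies in $\mathcal{E}(P)$; and invoking lower semicontinuity.

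For the first step, I would construct an explicit $v_0\in \mathcal{E}(P)$, for instance the $l^1$-distance to the boundary of $P$, or a piecewise affine function obtained from a suitable finite subdivision of $P$. Since $P\in\Pfin$ has boundary made of finitely many segments with slopes $\pm 1$, such a construction yields a finite partition $P=\bigcup_k P_k$ on which $v_0$ is affine with gradient in $\{-1,+1\}^2$, so $v_0\in\mathfrak{S}(P)$. The jump set of each $(v_0)_{x_i}$ lies in the finite union of interior edges of the partition, hence has finite $\mathcal{H}^1$-measure, giving $\Fold{P}(v_0)<+\infty$.

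For the second step, let $(v_n)\subset \mathcal{E}(P)$ be a minimizing sequence, eventually satisfying $\Fold{P}(v_n)\leq M$. The eikonal constraint $|(v_n)_{x_i}|=1$ a.e. gives a uniform Lipschitz bound, and $v_n=0$ on $\partial P$ bounds $\|v_n\|_\infty$ by the diameter of $P$; by Ascoli--Arzel\`a a subsequence converges uniformly to a Lipschitz function $v$ vanishing on $\partial P$. Writing $w_n^i := (v_n)_{x_i}\in\{-1,+1\}$, the absolutely continuous gradient of $w_n^i$ vanishes and, by (\ref{DVrepresentation}), $|Dw_n^i|(P)=2\mathcal{H}^1(J_{w_n^i})$, which together with $\Fold{P}(v_n)\leq M$ yields a uniform $BV(P)$ bound; note that $\mathcal{H}^1(\partial P)<+\infty$ since $P$ is Lipschitz. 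Applying Theorem \ref{compactnessSBV} with, e.g., $f(t)=\sqrt{t}$ (which satisfies (iii) since the jumps of $w_n^i$ have magnitude exactly $2$), a further subsequence satisfies $w_n^i\to w^i$ in $L^1(P)$, $\Leb^2$-a.e., and weakly* in $BV(P)$. The a.e. convergence forces $|w^i|=1$ a.e., while $v_n\to v$ in $L^1$ identifies $w^i=v_{x_i}$; hence $v\in\mathcal{E}(P)$.

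For the third step, apply Theorem \ref{semicontinuityBV} with $f\equiv 1$ to the weak* convergent sequences $w_n^i$ to obtain $|Dv_{x_i}|(P)\leq\liminf_n|D(v_n)_{x_i}|(P)$, and summing over $i$ yields $\Fold{P}(v)\leq\liminf_n\Fold{P}(v_n)=\inf_{\mathcal{E}(P)}\Fold{P}$, so $v$ is a minimizer. The main obstacle is the second step, specifically guaranteeing that the eikonal constraint $|v_{x_i}|=1$ is inherited by the limit: weak convergence of $w_n^i$ alone would only give $|v_{x_i}|\leq 1$, so the argument depends crucially on the $\Leb^2$-a.e. convergence provided by the SBV compactness theorem, which in turn rests on the uniform $BV$ bound coming from $\Fold{P}(v_n)\leq M$ and on the fact that the jump heights are bounded below by $2$.
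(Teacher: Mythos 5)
Your argument is correct and follows essentially the same direct-method scheme that the paper itself employs (and explicitly attributes to Champion--Croce) in its proof of Theorem \ref{main_thm}: exhibit a competitor with finite energy, get a uniform $BV$ bound from the eikonal constraint, apply the $SBV$ compactness Theorem \ref{compactnessSBV} to recover $|v_{x_i}|=1$ in the limit, and conclude by the lower semicontinuity Theorem \ref{semicontinuityBV}. The only differences are cosmetic: since the weight here is constant you may work on all of $P$ instead of the exhaustion $B_m$, and your choice $f(t)=\sqrt{t}$ plays exactly the role of the paper's $f\equiv 1$.
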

If $\Omega$ does not belong to $\Pfin$, the minimization of 
$
\Fold{\Omega}$ may  be not well-posed. 
Indeed we recall that in \cite{Champion-Croce} it was proved that
if $\Omega=(0,1)\times (0,1)$ then
$\Fold{\Omega}(v)$ is infinite for every $v\in \mathcal{E}(\Omega)$.
In Examples \ref{scala_infinita_cattiva} and \ref{scala-buona} we consider the case where $\Omega$ is a domain in $\mathcal{P}^{\infty}$.

\begin{example}\label{scala_infinita_cattiva}
We define a set $\Omega \in \mathcal{P}^{\infty}$
such that for every $v\in \mathcal{E}(\Omega)$  $\Fold{\Omega}(v)=\infty$.
We start fixing some notations. 
For a function $f:[a,b]\to \R^+$ we will denote by
$S_f$ the set
$$
S_f=\{(x_1,x_2)\in \R^2: a\leq x_1\leq b, 0\leq x_2\leq f(x_1)\}.
$$
For a given $N\in \N$, with the symbol $g_{(a,b)}^N$ we will denote the step function
$$
g_{(a,b)}^N=\sum_{j=0}^{N-1}{(b-a)}\frac{N-j}{N}\chi_{\left[a+{j}\frac{b-a}{N},{a+(j+1)}\frac{b-a}{N}\right)}\,.
$$ 
Let us consider the sequence of real numbers $t_{0}=0$, $t_{n}=\sum_{i=1}^{n}\frac{1}{2^{i}}=\frac{2^{n}-1}{2^{n}}$, 
and define  for $n\in \N$ the functions
\[
h_{n}=\frac{1}{2^{n}}\chi_{\left[ t_{n-1},t_{n} \right)} \;,\;\;\; g_{n}=g^{N_{n}}_{(t_{n-1},t_{n})}\;,
\]
\[
h=\sum_{n=1}^{+\infty}h_{n}\;,\;\;\; f=\sum_{n=1}^{+\infty}(h_{n}+g_{n})\;,
\]
where $N_{n}$ will be chosen later. Let $\Omega$ be
$\Rot(\interior{S_{f}})$. We can easily see that, if we set $C_{n}=\Rot(S_{g_{n}+\frac{1}{2^{n}}})$, 
then  $\interior{C_{i}}\cap \interior{C_{j}}=\emptyset$ and 
\[
\Rot(S_{f})=\Rot(S_{h})\cup \bigcup_{n=1}^{\infty}C_{n}\,.
\]
Thus the claim will be proved if we show that for every $n\in \N$
$$
\mathcal{H}^1\left(J_{v_{x_1}}\cap \interior{C_n}\right)+
\mathcal{H}^1\left(J_{v_{x_2}}\cap \interior{C_n}\right)\geq 1\,.
$$

Let $\sigma_n$ be the third middle part of the segment
$\big[\Rot \big( (t_{n-1},f(t_{n-1}))\big),\Rot\big ( (t_n,f(t_n))\big)\big]$; then
$\mathcal{H}^1(\sigma_n)=\frac{\sqrt{2}}{3\,2^n}$. For any $0<t<\frac{\mathcal{H}^1(\sigma_n)}{4}$ consider
the segment $\sigma_n^t=\tau_{-t}(\sigma_n)$, where $\tau_{w}, w\in \R,$ denotes 
the translation in the direction $(0,w)$. Clearly we have
$\sigma_n^t\subset \interior{C}_n$.
Now, let $r_t(s)$ be a parametrization for
$\sigma^t_n$.  If  $w_t(s)$ denotes the restriction of a
function $v\in\mathcal{E}(\Omega)$ 
to the segment $\sigma^t_n$, 
we have that 
$$
|w_t(s)| \leq d_{1}(r_t(s),\partial \Omega) = d_{1}(r_t(s),\partial C_n)\leq t+h_n\,,\,\,\,\,\,h_n=\frac{\sqrt{2}}{2^{n+1} N_n}\,.
$$ 
This implies that $w_t'$ has at least $\left[\frac{\mathcal{H}^1(\sigma_n)}{2(t+h_n)}\right]$ jumps.
Therefore, by Lemma \ref{federer}
\[
\begin{split}
\mathcal{H}^1\left(J_{v_{x_1}}\cap \interior{C}_n\right) &  \geq 
\int_{0}^{\frac{\mathcal{H}^1(\sigma_n)}{4}} \left[\frac{\mathcal{H}^1(\sigma_n)}{2(t+h_n)}\right]dt 
\\ & \geq  \int_{0}^{\frac{\mathcal{H}^1(\sigma_n)}{4}} \left(\frac{\mathcal{H}^1(\sigma_n)}{2(t+h_n)}-1\right)dt
\\ & =
\frac{\mathcal{H}^1(\sigma_n)}{2}\ln\left(\frac{\mathcal{H}^1(\sigma_n)}{4h_n}+1\right)-\frac{\mathcal{H}^1(\sigma_n)}{4}
\\ 
 &  = \frac{\sqrt{2}}{3\,2^{n+1}}\ln\left(\frac{\sqrt{2}}{12}N_n+1\right)-\frac{\sqrt{2}}{3\cdot 2^{n+2}}\,.
\end{split}
\]
A suitable choice of $N_n$ implies the claim.
\end{example}



\begin{example} \label{scala-buona}
We are going to define a domain $\Omega \in \mathcal{P}^{\infty}$ 
and a function $v\in \mathcal{E}(\Omega)$ such that
$\Fold{\Omega}(v)$ is finite. 
Let $t_n$ be defined as in the previous example. For any $n\in \N$ choose
$h_n< \frac{1}{2^n}$ and consider the domain $\Omega=\Rot(\interior{S_{f}})$ with
$f$ defined by 
\[
f=\sum_{n=1}^\infty h_n \chi_{(t_{n-1},t_n)}.
\]
Let $R_n=[t_{n-1},t_n]\times [0,h_n]$ and define the function
$v(x)=d_1(x,\partial R_n)$ if $x\in R_n$. Clearly $v$ is a solution
to problem  \eqref{SEE}; moreover $\Rot(S_f)=\cup_{n=1}^\infty \Rot(R_n)$ and
\[
\Hun(J_{v_{x_i}}\cap \Rot(R_n)) \leq 4 \left(
    h_n+\frac{1}{2^n}\right) \leq \frac{8}{2^n}. 
\]
It follows the following estimate: 
\[
 \begin{split}
 \Fold{\Omega}(v)  & \leq \Hun(J_{v_{x_1}})+\Hun(J_{v_{x_2}}) \\
    & \leq \sum_{n=1}^\infty  \left[\Hun(J_{v_{x_1}}\cap \Rot(R_n))
    + \Hun(J_{v_{x_2}}\cap \Rot(R_n)\right] \\
    & \leq  C
      \sum_{n=1}^\infty \frac{1}{2^n} < \infty\,,
\end{split}
\]
where $C$ denotes a positive constant independent of $n$.
\end{example}

\bigskip
We now pass to the case $t>1$. The reason why we do not use the functionals $\dys \mathcal{H}^t\left(J_{v_{x_1}}
\cup J_{v_{x_2}}\right)$ for some $t>1$, to select a solution to problem (\ref{SEE}) is clear: for every $v\in \mathcal{E}(\Omega)$, 
$$\dys \mathcal{H}^t \left(J_{v_{x_1}}
\cup J_{v_{x_2}}\right)=0\,,\,\,\,\forall\,t>1.
$$
Indeed,
for a given $v \in \mathcal{E}(\Omega)$
$$
\mathcal{H}^1\left(J_{v_{x_i}}\cap \omega\right)<\infty\,,
$$
for every 
$\omega\subset \overline{\omega}\subset \Omega$  and for  $i=1,2$. 
Recall now that Hausdorff measures have the property that
if $E$ is any measurable set such that $\mathcal{H}^r(E)<+\infty$, then $\mathcal{H}^{r+\varepsilon}(E)=0$ for every $\varepsilon>0$ (see \cite{evans-gariepy}).
This implies that
$\mathcal{H}^t\left(J_{v_{x_i}}\cap \omega\right)=0$ for every $t>1$. By definition of measure 
$\mathcal{H}^t \left(J_{v_{x_i}}\right)=0$ for every $t>1$. We remark
that this argument can be applied also if we work in dimension greater 
than 2. 

The previous observations suggest that, if one wants to isolate the most "regular" functions in $\mathcal{E}(\Omega)$, he is in some sense obliged  to use a weighted $\mathcal{H}^1$ measure.
In this article we define a weight depending only on the distance to the boundary.

In the following example we are going to motivate hypothesis
 \eqref{hyp_H_crescita} on $H$ 
and to illustrate
the ideas behind the proof of Theorem \ref{main_thm}, in the  case  where $\Omega$ is a  square. 
Due to the simple
geometry of the domain, we can perform several steps of the
proof avoiding almost all the technical difficulties that we need to
deal with in the general case and that are addressed in the next
section.
\begin{example}
 \label{ex-square}
Let $\Omega =(-1,1)\times(-1,1)$.
We are going to define a "reasonable" function $v \in \E(\Omega)$ and
impose that $\F(v)$ is finite.
Since the weight $H$ in the definition of $\F$ depends only on the
distance from the boundary,
our aim is to  investigate how fast the discontinuity
of  $\frac{\partial v}{\partial x_i},\, i=1,2$ develops near the
boundary. With this
information we will be able to define an appropriate weight $H$.

We define $v$ as follows.
Let $R$ be the triangle
with vertices in $(-1,1)$, $(1,1)$ and $(0,0)$. Then $\Omega$
is union of the counter-clockwise $\pm \frac{\pi}{2}$ and
$\pi$-rotations of $R$.
Therefore it is sufficient to define $v$ in $R$.
We fill $R$ with squares belonging to
$\Pfin$;  consequently they can be identified
once we know the position of the upper vertex and the length of the
diagonal. Let $\Omega_0$ be the square in $\Pfin$ with upper vertex in
$\left(0,1\right)$ and length of the diagonal equal to
$1$. For $n\in \N$ and $i=1,\dots,2^{n-1}$ let $Q_n^i$ be the square in $\Pfin$
with upper vertex in $
\left(
\frac{2i-1}{2^n},1
\right)$
and length of the diagonal equal to $\frac{1}{2^{n}}$. We set
$Q_n^{-i}= \Ref(Q_n^i)$. With this notation we can define the
following covering of $R$ made up by squares with
mutually disjoint interior:
\[
\overline{R}=\bigcup_{n=0}^\infty  \Omega_n \;\;\;,\;\;\;\;
\Omega_n:=\bigcup_{i=1}^{2^{n-1}} Q_n^i\cup Q_n^{-i}.
\]
Observe that the north corner of each square belongs to $\gamma=[(-1,1),(1,1)]$.
Now we define the solution $v$ as
\[
\begin{array}{lcl}
\displaystyle v_0(0)= d_1(x, \partial \Omega_0)\chi_{\Omega_0} & ,&
v_n^i(x)=d_1(x, \partial Q_n^i)\chi_{Q_n^i}\;, \\ \\
v_n^{-i}(x)=d_1(x, \partial Q_n^{-i})\chi_{Q_n^{-i}} & , &
\displaystyle v_n(x)=\sum_{i=1}^{2^{n-1}} [v_n^i(x) + v_n^{-i}(x)] \,; \\
\end{array}
\]
\[
v(x)= \sum_{n=0}^\infty v_n(x).
\]
It is clear from the definition that $v\in \mathcal{E}(\Omega)$ and
that the distributional gradient  of $Dv$ is supported on the
sides and the diagonals of the squares we used to fill $R$.
In order to estimate them, let us fix some notations.
Let 
\[
R_m=\left\{(x_1,x_2)\in R: \frac{1}{m+1}<x_2\leq \frac 1m\right\}.
\]
We denote by:
\begin{itemize}
\item
$\mathcal{S}$ the union of the sides of the squares $Q_n^{\pm i}$,
\item
$\mathcal{D}_v$ the union of the vertical diagonals of the squares
$Q_n^{\pm i}$,
\item
$\mathcal{D}_h$ the union of the horizontal diagonals of the squares
$Q_n^{\pm i}$,
\end{itemize}
for  $n\in \N$ and $i=0,\dots,2^{n-1}$.
Therefore $\mathcal{F}(v)$ is finite if the three following quantities
$$
\int_{\mathcal{D}_h}H(d_1(x,\gamma))d\mathcal{H}^1\,,\,\,\,\,\,\,\,\,\,\,
\int_{\mathcal{D}_v}H(d_1(x,\gamma))d\mathcal{H}^1\,,\,\,\,\,\,\,\,\,\,\,
\int_{\mathcal{S}}H(d_1(x,\gamma))d\mathcal{H}^1
$$
are finite.
The first quantity is very simple to estimate, if one remarks that the
horizontal diagonals appear only
at heigth $\frac{1}{2^n}, n\in \N$, and their total length is $1$ for
every $n\in \N$.
Therefore
$$
\int_{\mathcal{D}_h}H(d_1(x,\gamma))d\mathcal{H}^1\leq
\sum_{n=1}^{\infty}H\left(\frac{1}{2^n}\right)\,.
$$
Using the Cauchy's condensation criterion, the last series is finite
if and only if
\begin{equation}\label{Cauchy_example}
\sum_{n=1}^{\infty}H\left(\frac{1}{n}\right)\frac 1n<\infty\,.
\end{equation}
To estimate the sides and the vertical diagonals we will use a
different strategy.
We observe that the number $N_m$ of squares which intersect $R_m$
is bounded by $m+1$. Indeed
if $x_0$ is the north corner of a square intersecting $R_m$, the
distance from $x_0$ to the next north corner
of a square intersecting $R_m$ is at least $\frac{2}{m+1}$. Therefore
$$N_m=\frac{\mathcal{H}^1(\gamma)}{\frac{2}{m+1}}=m+1\,.$$
Now, let $Q$ be a square intersecting $R_m$. The
the $\mathcal{H}^1$ measure of the intersection of the vertical
diagonal of $Q$ with $R_m$ is less or equal to $\frac
1m-\frac{1}{m+1}=\frac{1}{m(m+1)}$ and
the $\mathcal{H}^1$ measure of the intersection of one of the sides of
$Q$ with $R_m$ is bounded by $\sqrt{2}\left(\frac
1m-\frac{1}{m+1}\right)=\frac{\sqrt{2}}{m(m+1)}$.
Therefore
$$
\int_{\mathcal{D}_v}H(d_1(x,\gamma))d\mathcal{H}^1\leq
\sum_{m=1}^{\infty}H\left(\frac 1m\right)N_m\frac{1}{m(m+1)}\leq 
\sum_{m=1}^{\infty}H\left(\frac 1m\right)\frac 1m
$$
and
$$
\int_{\mathcal{S}}H(d_1(x,\gamma))d\mathcal{H}^1\leq
2\sum_{m=1}^{\infty}H\left(\frac 1m\right)N_m\frac{\sqrt{2}}{m(m+1)}\leq
2\sqrt{2}\sum_{m=1}^{\infty}H\left(\frac 1m\right)\frac 1m\,.
$$
Hence we find the same condition on $H$ as in (\ref{Cauchy_example}):
$$\sum_{m=1}^{\infty}H\left(\frac{1}{m}\right)\frac 1m<\infty\,.$$
\end{example}

\section{Proof of the main result}
In this section we are going to prove Theorem \ref{main_thm}.
The proof can be divided into two parts: in the first we show that the variational problem
(\ref{variational_problem}) is well-posed (see Proposition
\ref{functional_finite}); in the second one we prove that there exists
a minimizer of the functional $\mathcal{F}$ in $\mathcal{E}(\Omega)$.

We are going to concentrate on the first step, that is, we are going to construct a function $v\in \mathcal{E}(\Omega)$ such that
$\mathcal{F}(v)$ is finite.
For this purpose, recalling that $\Omega$ is a compatible domain, 
according to Remark \ref{covering_net}, we have
\begin{equation}\label{decomposition}
\overline{\Omega}=\overline{P}\cup\bigcup_{j=1}^N \mathcal{W}_j(T_j)
\end{equation}
where $P\in \mathcal{P}^f$, $T_j \in \mathcal{T}$ and
$\mathcal{W}_j(T_j)=\Rot^{2k_j+1} (T_j)$ or
$\Ref(\Rot^{2k_j+1} (T_{j}))$ for some $k_j \in
\{0,1,2,3\}$. 

With the aim of setting $v$ in each domain $\mathcal{W}_j(T_j)$ we will define a special countable covering of the
interior of a triangular domain made up by squares. We start by introducing three operators defined
in $\mathcal{T}$. For a given 
$$T=\{(x_1,x_2): a \leq x_1 \leq b, h(b)
\leq x_2 \leq h(x_1)\}\in \mathcal{T}\,,$$ 
 let $x_1^0$ be such that
$h(x_1^0)=x_1^0+h(b)-a$ and define
\[
\begin{array}{lcl}
 \displaystyle  q(T) & := &  \displaystyle  (a,x_1^0)\times (h(b),h(b)+x_1^0-a); \\
  u(T) & := & \displaystyle  \{(x_1,x_2)\in T: a<x_1<x_1^0, h(b)+x_1^0<x_2<h(x_1)\}; \\
  r(T) & := &  \displaystyle  \{(x_1,x_2)\in T: a+x_1^0<x_1<b, h(b)<x_2<h(x_1)\}.
\end{array}
\]
We explicitly observe that $u$ and $r$ have values in $\mathcal{T}$
while $q$ maps  any triangular domain to a square in $\Rot(\Pfin)$.


\begin{definition}[of the covering of $T$]\label{definition_covering}
Let for $m \in \N$
\[
S_m:=\{ \sigma=(\alpha_1,\dots,\alpha_m) \;:\;\alpha_i\in \{u,r\} \}\,,
\]
be the set of all the $m$-permutations of the two letters $u$ and
$r$. For $\sigma\in S_m$, using the notation 
\[
\sigma(T)=\alpha_1 \circ \alpha_2 \circ \cdots \circ \alpha_m(T),
\]
we set
\[
Q_T^{m,\sigma}= q(\sigma(T))\;\;;\;\;\;\sigma \in S_m.
\]
We finally define the following family of squares contained in $T$:
\[
\mathcal{Q}(T):=\{Q_T^{m,\sigma} \;:\; m\in \N\cup \{0\} \;,\;\;\sigma\in S_m\}.
\]  
\end{definition}
\begin{remark}
We remark that $\mathcal{Q}(T)$ is a covering of
$\interior{T}$ composed of squares with mutually disjoint interiors
belonging to $\Rot(\Pfin)$. It  may be useful to think of $\mathcal{Q}(T)$ as being constructed in steps, starting from $m=1$ and adding at 
step $m$ the squares $Q_T^{m,\sigma}$ with $\sigma\in S_m$. Since the
cardinality of $S_m$, $\sharp (S_m)$, is equal to $2^m$, we add $2^m$ squares at the $m$-th
step. Therefore the first steps of the construction are:

\vspace{0.3cm}
\begin{tabular}{lcll}
\vspace{0.2cm}
{\it Step 0.} & We start with &   $Q^0_T=q(T)$ ; & \\ 
\vspace{0.2cm}
{\it Step 1.} & we add & $Q_T^{1,u}=q(u(T))$ \;and &  $Q_T^{1,r}=q(r(T))$ ; \\ \vspace{0.2cm}
{\it Step 2.} &  we add &  $Q_T^{2,(u,u)}=q(u(u(T)))$, &
$Q_T^{2,(u,r)}=q(u(r(T)))$, \\
\vspace{0.2cm}
 & &  $Q_T^{2,(r,u)}=q(r(u(T)))$, & $Q_T^{2,(r,r)}=q(r(r(T)))$ \\
\vspace{0.2cm}
{\it Step 3.} &  we add & $Q_T^{3,(u,u,u)}=q(u(u(u(T))))$, &
$Q_T^{3,(u,u,r)}=q(u(u(r(T))))$, \\
 &  & $\cdots$ & $\cdots$ 
\end{tabular}
In the sequel, when $\sharp (S_m)$ will play a role,
we will use for $Q_T^{m,\sigma}$ the notation $Q_T^{m,k}$,
$k\in\{1,\dots,2^m\}$. 
\end{remark}

%
%

We are now in a position to define the candidate function $v:\Omega
\to \R^+ $ such that $\mathcal{F}(v)$ is finite, which is the
natural generalization of the construction made in Example \ref{ex-square}.
\begin{definition}[of the candidate function]\label{defn_function}
Let $w$ be a minimizer of the functional $\Fold{P}$. For a fixed $j\in \{1, \dots,
N\}$, let
\[
v_m^j(x)= \sum_{\sigma \in S_m}d_1\left(x,\partial \left(
    \mathcal{W}_j(Q_{T_j}^{m,\sigma})\right)\right)\chi_{Q_{T_j}^{m,\sigma}}(x)\, ;
\]
\[
v_j (x)= \sum_{m\in\N} v_m^j(x).
\]
We define
\[
v(x)=w(x)\chi_{P}(x)+\sum_{j=1}^Nv_j(x).
\]
\end{definition}

In the following we will prove that problem (\ref{variational_problem}) is well-posed.
We remark that in $\Omega\setminus P$, $Dv_{x_1}$ and $Dv_{x_2}$ are
supported on the sides of each square
$\mathcal{W}_j(Q^{m,\sigma}_{T_j})$ and on its vertical and horizontal
diagonals respectively. For this reason, we denote by
$\mathcal{D}_T^{+}$ and $\mathcal{D}_T^{-}$ the union of all the diagonals
of the squares in  $\mathcal{Q}(T)$ parallel to $\ell_{+}$ and
$\ell_{-}$ respectively, and  by $\mathcal{S}_T$ the union of all the sides of the squares in $\mathcal{Q}(T)$. 

\begin{proposition}\label{functional_finite}
The function $v\in \mathcal{E}(\Omega)$ defined in \ref{defn_function} satisfies $\mathcal{F}(v)<+\infty$.
\end{proposition}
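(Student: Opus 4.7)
The strategy is to split $\F(v)$ along the covering \eqref{decomposition} as
\[
\F(v)=\F_P(v)+\sum_{j=1}^N\F_j(v),
\]
where $\F_P(v)$ and $\F_j(v)$ collect the contributions from $P$ and from $\mathcal{W}_j(T_j)$ respectively. The first summand is immediate: by Theorem \ref{cha-cro} the function $w$ minimizes $\Fold{P}$ on $\mathcal{E}(P)$, so $|Dw_{x_i}|(P)<\infty$ for $i=1,2$, and the weight $H(d_1(\cdot,\partial\Omega))$ is bounded on $\overline{P}$ by continuity of $H$; hence $\F_P(v)<\infty$. All the difficulty lies in showing $\F_j(v)<\infty$ for each $j$.

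The first reduction is to pull back to the unrotated triangular domain. Let $\Gamma_j=\{(x_1,h_j(x_1)):x_1\in[a_j,b_j]\}$ be the curved side of $T_j$; then $\mathcal{W}_j(\Gamma_j)\subset\partial\Omega$, and since $H$ is increasing,
\[
H(d_1(y,\partial\Omega))\le H(d_1(y,\mathcal{W}_j(\Gamma_j)))=H\bigl(\sqrt{2}\,d_\infty(\mathcal{W}_j^{-1}(y),\Gamma_j)\bigr),
\]
the equality following from the direct identity $\norm{\mathcal{W}_jz}_1=\sqrt{2}\,\norm{z}_\infty$ valid for every $z\in\R^2$ (because $\mathcal{W}_j\in\{\Rot^{2k_j+1},\Ref\circ\Rot^{2k_j+1}\}$). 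On each $\mathcal{W}_j(Q_{T_j}^{m,\sigma})$ the function $v$ coincides with the $l^1$-pyramid of that square, so by \eqref{DVrepresentation} the jumps of $v_{x_i}$ are supported on the $\mathcal{W}_j$-image of $\partial Q_{T_j}^{m,\sigma}$ and of its two diagonals, with jump at most $2$. Letting $J_{m,\sigma}$ denote the union of the four sides and two diagonals of $Q_{T_j}^{m,\sigma}$ inside $T_j$, changing variables by $\mathcal{W}_j^{-1}$ (which preserves $\Hun$) gives
\[
\F_j(v)\le c_0\sum_{m,\sigma}\int_{J_{m,\sigma}}H\bigl(\sqrt{2}\,d_\infty(x,\Gamma_j)\bigr)\,d\Hun(x).
\]

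The per-square estimate is then elementary: the top vertex $(x_1^{0,\sigma},h_j(x_1^{0,\sigma}))$ of $Q_{T_j}^{m,\sigma}$ lies on $\Gamma_j$, so every point of $\overline{Q_{T_j}^{m,\sigma}}$ has $l^\infty$-distance at most $s_{m,\sigma}$ from $\Gamma_j$, while $\Hun(J_{m,\sigma})\le(4+2\sqrt{2})s_{m,\sigma}$. Hence
\[
\F_j(v)\le c_1\sum_{m,\sigma}s_{m,\sigma}\,H\bigl(\sqrt{2}\,s_{m,\sigma}\bigr).
\]
Stratifying by dyadic scales, for $n$ large set $\mathcal{I}_n=\{(m,\sigma):s_{m,\sigma}\in(2^{-n-1},2^{-n}]\}$. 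The crucial estimate to be proved is the combinatorial bound
\begin{equation}\label{eq:plan-count}
\#\mathcal{I}_n\le c_2\,\Hun(\Gamma_j)\,2^n,
\end{equation}
which, together with the finitely many squares of larger size, yields
\[
\F_j(v)\le c_3\sum_n H\bigl(\sqrt{2}\,2^{-n}\bigr)<\infty
\]
by Cauchy's condensation criterion applied to the hypothesis $\int_0^1 H(t)/t\,dt<\infty$.

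The technical heart of the proposition, and the main obstacle, is establishing \eqref{eq:plan-count}. Heuristically, each square of side $s$ ``claims'' an arc of $\Gamma_j$ of length comparable to $s$ above its horizontal shadow, and the pairwise disjointness of the squares in $\mathcal{Q}(T_j)$ ought to limit how often an arc can be claimed. Making this rigorous is where the admissibility of $\Gamma_j$ (Definition \ref{def:01}) enters in an essential way: the finiteness of the connected components of $\{h_j'=-1\}$ prevents the recursive operators $u$ and $r$ from producing arbitrarily many same-scale squares clustered on a short arc of $\Gamma_j$. This geometric bookkeeping---particularly delicate near the endpoints of $[a_j,b_j]$, where $h_j'$ may approach $-1$---is where the technical effort of the proof concentrates.
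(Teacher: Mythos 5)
Your reduction is sound up to the point where everything hinges on the counting bound $\#\mathcal{I}_n\le c_2\,\Hun(\Gamma_j)\,2^n$, and that is exactly where the proposal stops: you assert the bound, call it the technical heart, and defer its proof to unspecified geometric bookkeeping tied to the admissibility of $\Gamma_j$. As written this is a genuine gap, since the one estimate that makes your series converge is never established. Moreover the heuristic you offer points in the wrong direction: admissibility plays no role in the per-triangle count (it is consumed earlier, in Remark \ref{covering_net}, to guarantee that the decomposition \eqref{decomposition} has only finitely many pieces), and no control of $h_j'$ near $-1$ is needed. The bound follows from a short packing argument that you should supply: every $Q_{T_j}^{m,\sigma}$ has its north-east corner on $\Gamma_j$, so a square of side $s\in(2^{-n-1},2^{-n}]$ is contained in the neighbourhood $\{x:d_\infty(x,\Gamma_j)\le 2^{-n}\}$, whose Lebesgue measure is at most $C\big(\Hun(\Gamma_j)+2^{-n}\big)2^{-n}$ because $\Gamma_j$, being the graph of a bounded monotone function, has finite length; since the squares of $\mathcal{Q}(T_j)$ have mutually disjoint interiors and each element of $\mathcal{I}_n$ has area greater than $4^{-n-1}$, comparing areas gives $\#\mathcal{I}_n\le C'\big(\Hun(\Gamma_j)2^{n}+1\big)$, which is all you need.

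Once that is inserted your argument is complete and is genuinely different from, and shorter than, the paper's. The paper stratifies by distance to the boundary, setting $L_n=\{1/(n+1)<d_\infty(x,\gamma)\le 1/n\}$, counts the squares meeting each layer (Lemma \ref{lemma_number_squares}), and then estimates the length of the intersection of each side and diagonal with $L_n$; this forces a case analysis on $h'$ (bounded away from $-1$, close to $-1$, blowing up at an endpoint), Riemann-sum estimates for the sides, and Schl\"omilch's condensation test in Lemma \ref{lemma-D-}. You instead stratify by square size and bound each square's entire contribution by $(4+2\sqrt{2})\,s\,H(\sqrt{2}\,s)$, which collapses the three Lemmata \ref{lemma-S}, \ref{lemma-D+} and \ref{lemma-D-} into one uniform estimate that is insensitive to the behaviour of $h'$. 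One last small point: besides the sides and diagonals of the squares, $Dv_{x_i}$ may also charge the finitely many interfaces between $P$ and the sets $\mathcal{W}_j(T_j)$, and between adjacent $\mathcal{W}_j(T_j)$; these have finite total $\Hun$-measure and bounded weight, so they contribute finitely, but a sentence acknowledging them would make the decomposition $\F=\F_P+\sum_j\F_j$ airtight.
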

\begin{proof}
First we note that according to \eqref{decomposition} we can estimate $\mathcal{F}(v)$
as follows:
$$
\begin{array}{ll}
\mathcal{F}(v)&\dys =\sum_{i=1}^2\int_{\overline{P}}H(d_{1}(x,\partial \Omega))d|D\,w_{x_i}|(x)+
\sum_{i=1}^2\int_{\Omega \setminus {P}}H(d_{1}(x,\partial \Omega))d|D\,v_{x_i}(x)|(x)
\\
&\dys \leq \sum_{i=1}^2\int_{\overline{P}}C\,d|D\,w_{x_i}|(x)+
\sum_{i=1}^2\int_{\bigcup_{j=1}^N \mathcal{W}_j(T_j)}H(d_{1}(x,\partial \Omega))d|D\,v_{x_i}(x)|(x)
\end{array}
$$
where we have used that for some positive constant $C$, $H(d_1(x,\partial \Omega))\leq C$ for all $x \in \overline{\Omega}$.

The first term of the last estimate is finite, due to the choice of $w$. Therefore it is sufficient to estimate 
$$
\sum_{i=1}^2\sum_{j=1}^N\int_{ \mathcal{W}_j(T_j)}H(d_{1}(x,\partial \Omega))d|D\,v_{x_i}(x)|(x)\,.
$$

The construction of $v$, the properties of the rigid motion $\mathcal{W}_j$ and the
duality between the metrics $d_1$ and $d_\infty$ imply that we can
restrict ourselves to prove that there exists a positive constant $M$ such that
\begin{equation}
  \label{eq:2}
  \int_{\mathcal{S}_{T_j}\cup \mathcal{D}_{T_j}^- \cup
    \mathcal{D}_{T_j}^+} H(d_{\infty}(x,\mathcal{W}_j^{-1}(\partial
  \Omega)))d\mathcal{H}^1 \leq M \;\;\; \forall \,j\in \{1,\dots,N\}.
\end{equation}
The last estimate is a consequence of  Lemmata \ref{lemma-S},
\ref{lemma-D+} and \ref{lemma-D-}.
\end{proof}

In the next lemmata we aim to prove \eqref{eq:2}. We will estimate
separately the integral on the sets $\mathcal{S}_{T_j}$,
$\mathcal{D}_{T_j}^-$ and $\mathcal{D}_{T_j}^+$. From now on we will
work on a single triangular domain $T_j$; since the proofs are independent of $j$, we will simply write $T$ instead of $T_j$. Up to a dilatation
and a rigid motion, we can assume in the sequel that
$$
T=\{(x_1,x_2): 0\leq x_1\leq 1, 0\leq x_2\leq h(x_1)\}\,.
$$
with $h(1)=0$, $h(0)\leq 1$ and $h'(t)<0$ for every $t\in (0,1)$. We will denote
by $\gamma$ the curve $ \mathcal{W}_j^{-1}(\partial\Omega)$ and we
observe explicitly that the graph of the function $h$ is a proper subset of
$\gamma$.  We define 
$$
L_n=\left\{x \in T: \frac 1{n+1}<d_{\infty}(x,\gamma)\leq\frac 1{n}\right\}\,,
$$
for $n\in \N$.

 Let $Q\in \mathcal{Q}(T)$ having a not empty intersection with $L_n$ and assume that $Q$ touches
$\gamma$ in $(x_1,h(x_1))$.
Let $\mathtt{e}_n(Q)$, $\mathtt{n}_n(Q)$, $\mathtt{d}_n(Q)$
denote respectively the length of the intersection of the east side,
north side, diagonal parallel to $\ell_{+}$ of $Q$ with $L_n$.
Then it is easily to verify that
\begin{equation}\label{vertical_intersection}
\mathtt{e}_n(Q)\leq h\left(x_1+\frac{1}{n+1}\right)-h\left(x_1+\frac{1}{n}\right)+\frac{1}{n(n+1)}\,,
\end{equation}
\begin{equation}\label{horizontal_intersection}
\mathtt{n}_n(Q)
\leq h^{-1}\left(h\left(x_1\right)+\frac{1}{n+1}\right)-h^{-1}\left(h\left(x_1\right)+\frac{1}{n}\right)+\frac{1}{n(n+1)}
\end{equation}
and
\begin{equation}\label{diagonal_intersection}
\mathtt{d}_n(Q)\leq\frac{1}{n(n+1)}\,.
\end{equation}

\begin{lemma}\label{lemma_number_squares}
Let $N_n$ be the number of squares in $\mathcal{Q}(T)$ which intersect
$L_n$. Then there exists a positive constant $c$ depending only on $\gamma$
such that
\begin{equation}\label{formula_number_squares}
N_n\leq{c}{(n+1)}.
\end{equation}
\end{lemma}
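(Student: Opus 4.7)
My plan is to exploit two geometric facts about the covering: every square in $\mathcal{Q}(T)$ has its upper-right corner lying on the graph of $h$, and the squares have mutually disjoint interiors. Combined with the monotonicity of $h$, this will reduce the lemma to a simple packing count along a monotone curve.

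Concretely, I would list the squares $Q_1,\dots,Q_{N_n}$ of $\mathcal{Q}(T)$ that meet $L_n$, write the upper-right corner of $Q_i$ as $(x^{(i)},h(x^{(i)}))$, denote its side length by $s_i$, and order them so that $x^{(1)}<\cdots<x^{(N_n)}$. The first step is to establish $s_i>1/(n+1)$: since $(x^{(i)},h(x^{(i)}))\in\gamma$, every point $p\in Q_i$ satisfies $d_{\infty}(p,\gamma)\le s_i$, while $Q_i\cap L_n\neq\emptyset$ supplies a point with $d_{\infty}(p,\gamma)>1/(n+1)$.

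The second step is a separation estimate for consecutive corners. Two axis-aligned rectangles with disjoint interiors must have disjoint horizontal or disjoint vertical projections. Together with $x^{(i)}<x^{(i+1)}$ and $h$ decreasing, this forces either $x^{(i+1)}-x^{(i)}\ge s_{i+1}$ or $h(x^{(i)})-h(x^{(i+1)})\ge s_i$, so by the previous step
\[
\max\!\bigl(x^{(i+1)}-x^{(i)},\,h(x^{(i)})-h(x^{(i+1)})\bigr)\;>\;\frac{1}{n+1}.
\]

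Summing from $i=1$ to $N_n-1$ and dominating the maximum by the sum of the two nonnegative quantities, both of which telescope, gives
\[
\frac{N_n-1}{n+1}\;<\;\bigl(x^{(N_n)}-x^{(1)}\bigr)+\bigl(h(x^{(1)})-h(x^{(N_n)})\bigr)\;\le\;1+h(0),
\]
and hence $N_n\le c(n+1)$ for a constant $c$ depending only on the $x$- and $y$-extents of the graph of $h$, i.e.\ on $\gamma$. I do not foresee any serious obstacle: the only claim that merits an explicit verification is that every element of $\mathcal{Q}(T)$ has its upper-right corner on the graph of $h$, which follows by induction on the depth $m$ of the construction from the defining formulas of $q$, $u$, $r$ and the identity $h(x_1^0)=x_1^0+h(b)-a$.
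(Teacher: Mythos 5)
Your argument is correct, and it rests on the same underlying geometric facts as the paper's proof (north-east corners of the squares lie on the graph of $h$, and a square meeting $L_n$ cannot be too small), but the execution is genuinely different and in some respects more explicit. The paper splits $\mathcal{Q}(T)$ into the squares lying in $r(T)$ and in $u(T)$, lays down the grid of points $x_1^j=h^{-1}\bigl(x_1^0-\tfrac{j}{n+1}\bigr)$ on $\gamma$, and asserts that each of the $\leq n+1$ resulting arcs can carry at most one north-east corner of a square meeting $L_n$; the quantitative reason for that last assertion is left implicit. You instead make the key bound explicit --- a square $Q$ with $Q\cap L_n\neq\emptyset$ has side $s>\tfrac{1}{n+1}$, because its north-east corner lies on $\gamma$ and hence $d_\infty(p,\gamma)\leq s$ for every $p\in Q$ --- and then replace the grid by a telescoping packing estimate: consecutive corners (ordered by abscissa) satisfy
\[
\max\bigl(x^{(i+1)}-x^{(i)},\,h(x^{(i)})-h(x^{(i+1)})\bigr)\geq \min(s_i,s_{i+1})>\tfrac{1}{n+1},
\]
by the disjoint-projection dichotomy for axis-aligned squares with disjoint interiors, and summing against the total horizontal and vertical extents of the graph of $h$ gives $N_n\leq c(n+1)$ in one stroke, with no case split between $r(T)$ and $u(T)$. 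The two auxiliary facts you flag do hold: the corners are pairwise distinct (two axis-aligned squares sharing a north-east corner would have nested, hence non-disjoint, interiors), and the induction showing that $q(\sigma(T))$ always has its north-east corner on the graph of $h$ is immediate from $h(x_1^0)=x_1^0+h(b)-a$ and the fact that $u$ and $r$ return triangular subdomains whose upper boundary is again a piece of the graph of $h$. So your proof is complete; its main advantage is that it supplies the quantitative separation step that the paper only sketches.
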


\begin{proof}
Let $(x_1^0,x_1^0)$ be the north-east corner of $q(T)$. 
Define
$$
\dys
x_1^j=h^{-1}\left(x^0_1-\frac{j}{1+n}\right)\,,\,\,\,\,\,\,j=1,\dots,
m\leq (n+1)\,.$$
Fix $i\in \{0,\dots,m\}$ and note that if on the portion of $\gamma$
lying between $(x^{i}_1,h(x^{i}_1))$ and $(x_1^{i+1},h(x_1^{i+1}))$
there are two points, say $p$ and $q$, which are north-east
vertex of squares, $Q_p$ and $Q_q$ in $\mathcal{Q}(T)$, then $L_n\cap
Q_p$ or $L_n\cap Q_q$ is empty.

Consequently  the number of squares in $r(T)$ which intersect $L_n$  can be at most $n+1$.
With the same argument one can prove that the same estimate holds for 
the number of squares in $u(T)$ which intersect $L_n$.
\end{proof}


\begin{lemma}\label{lemma-D+}
There exists a positive constant $M$ such that
\[
 \int_{\mathcal{D}_{T}^+} H(d_{\infty}(x,\gamma))d\mathcal{H}^1 \leq M.
\]
\end{lemma}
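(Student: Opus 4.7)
The plan is to slice $\mathcal{D}_T^+$ by the level sets $L_n$ of $d_\infty(\cdot,\gamma)$, reduce the geometric integral to a numerical series, and then compare that series to $\int_0^1 H(t)/t\,dt$, which is finite by hypothesis \eqref{hyp_H_crescita}.

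First I would write $\mathcal{D}_T^+=\bigsqcup_{n\geq 1}(\mathcal{D}_T^+\cap L_n)$, modulo the $\mathcal{H}^1$-negligible set of points lying on $\gamma$ itself, and using that after the normalisation $T\subset[0,1]^2$ has $d_\infty$-diameter at most $1$. Since $H$ is increasing, on $L_n$ we have $H(d_\infty(x,\gamma))\le H(1/n)$, so the whole problem reduces to estimating $\mathcal{H}^1(\mathcal{D}_T^+\cap L_n)$ for each $n$.

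This is where the two preliminary estimates plug in. By \eqref{diagonal_intersection}, for every square $Q\in\mathcal{Q}(T)$ that meets $L_n$, its $\ell_+$-diagonal contributes a piece of length at most $\tfrac{1}{n(n+1)}$ to $\mathcal{D}_T^+\cap L_n$. By Lemma \ref{lemma_number_squares}, at most $c(n+1)$ squares of $\mathcal{Q}(T)$ meet $L_n$. Multiplying, $\mathcal{H}^1(\mathcal{D}_T^+\cap L_n)\le c/n$, and therefore
\[
\int_{\mathcal{D}_T^+}H(d_\infty(x,\gamma))\,d\mathcal{H}^1\;\le\;c\sum_{n=1}^{\infty}\frac{H(1/n)}{n}.
\]

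The last step is a Cauchy-condensation type comparison: using only the monotonicity of $H$,
\[
\int_{1/(n+1)}^{1/n}\frac{H(t)}{t}\,dt\;\ge\;H(1/(n+1))\log\!\left(\tfrac{n+1}{n}\right)\;\ge\;\frac{H(1/(n+1))}{n+1},
\]
so the series $\sum_n H(1/n)/n$ is bounded by $H(1)+\int_0^1 H(t)/t\,dt<+\infty$ thanks to \eqref{hyp_H_crescita}. This supplies the required constant $M$. I do not see a genuine obstacle here: the $\ell_+$-diagonal case is the easiest of the three because $\mathtt{d}_n(Q)$ enjoys the symmetric bound $\tfrac{1}{n(n+1)}$, which is exactly sharp enough that the $(n+1)$ from Lemma \ref{lemma_number_squares} cancels; the only place one needs to be careful is the comparison step, and that is a standard exercise.
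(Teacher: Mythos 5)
Your argument is correct and is essentially the paper's own proof: the paper likewise combines the bound $\mathtt{d}_n(Q)\leq\frac{1}{n(n+1)}$ from \eqref{diagonal_intersection} with the count $N_n\leq c(n+1)$ from Lemma \ref{lemma_number_squares} and the monotonicity of $H$ to reduce everything to $C\sum_n H(1/n)/n$. The only difference is that you spell out the integral-comparison step showing this series converges under \eqref{hyp_H_crescita}, which the paper leaves implicit.
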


\begin{proof}
  
The proof easily follows from (\ref{diagonal_intersection}),
(\ref{formula_number_squares}) and the hypotheses on $H$. Indeed one has
$$
\int_{\mathcal{D}^+_T}H(d_{\infty}(x,\gamma))d\mathcal{H}^1\leq C\sum_{n=1}^{\infty} H\left(\frac{1}{n}\right)\frac{1}{n}<+\infty\,,
$$
for some positive constant $C$ independent of $n$.
\end{proof}

\begin{lemma}\label{lemma-D-}
There exists a positive constant $M$ such that
\begin{equation}\label{stima-D-}
 \int_{\mathcal{D}_{T}^-} H(d_{\infty}(x,\gamma))d\mathcal{H}^1 \leq M.
\end{equation}
\end{lemma}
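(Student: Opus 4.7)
My plan is to give a crude per-square upper bound of $\sqrt{2}\,\ell(Q)H(\ell(Q))$ for the integral over each $\ell_-$ diagonal, and then to sum over $Q \in \mathcal{Q}(T)$ by grouping the squares into dyadic side-length classes whose populations I will estimate by an area argument. The main obstacle compared with Lemma \ref{lemma-D+} is that the $\ell_-$ diagonal of $Q$ may run almost parallel to $\gamma$---this happens wherever the slope of $h$ is close to $-1$---so no pointwise substitute for the bound $\mathtt{d}_n(Q)\leq 1/(n(n+1))$ is available. I will therefore work globally, by size class.

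The per-square bound is immediate: each $Q$ has NE corner on the graph of $h$, which is contained in $\gamma$, and $Q$ has $d_\infty$-diameter $\ell(Q)$, so $d_\infty(x,\gamma) \leq \ell(Q)$ for every $x \in Q$. Since $H$ is increasing and the $\ell_-$ diagonal of $Q$ has $\mathcal{H}^1$-measure $\sqrt{2}\,\ell(Q)$, one obtains
\[
\int_{\text{$\ell_-$ diag of }Q} H(d_\infty(x,\gamma))\,d\mathcal{H}^1 \leq \sqrt{2}\,\ell(Q)\,H(\ell(Q)).
\]
Summing over $Q$, it therefore suffices to bound $\sum_{Q \in \mathcal{Q}(T)} \ell(Q) H(\ell(Q))$.

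The main step is the size-class count: let $M_k := \#\{Q\in \mathcal{Q}(T) : \ell(Q) \in (2^{-k-1}, 2^{-k}]\}$. I claim $M_k \leq C_1 \cdot 2^k$ for a constant $C_1$ depending only on $T$. The key observation is that every $Q$ with $\ell(Q) \leq 2^{-k}$ is contained in the $L^\infty$-tube of width $2^{-k}$ around the graph of $h$ (using that the NE corner sits on this graph and the diameter bound). A short vertical-slicing computation, relying only on the monotonicity of $h$, shows this tube has area at most $C_2\cdot 2^{-k}$, with $C_2$ depending only on the horizontal and vertical extents of $T$. Since the squares of $\mathcal{Q}(T)$ have pairwise disjoint interiors and each $Q$ in the $k$-th class has area $> 2^{-2k-2}$, summing their areas gives $M_k\cdot 2^{-2k-2} \leq C_2\cdot 2^{-k}$, whence $M_k \leq 4C_2\cdot 2^k$.

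Putting everything together,
\[
\sum_{Q} \ell(Q)H(\ell(Q)) \;\leq\; \sum_{k\geq 0} M_k \cdot 2^{-k} H(2^{-k}) \;\leq\; 4C_2\sum_{k\geq 0} H(2^{-k}),
\]
and the hypothesis $\int_0^1 H(t)/t\, dt <\infty$ makes the last series converge via the standard comparison $\sum_{k\geq 1} H(2^{-k}) \leq 2\int_0^1 H(t)/t\, dt$, yielding the required finite constant $M$ in \eqref{stima-D-}.
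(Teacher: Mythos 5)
Your proof is correct, and it takes a genuinely different and substantially more elementary route than the paper's. The paper stratifies $T$ by the level sets $L_n=\{1/(n+1)<d_\infty(x,\gamma)\le 1/n\}$, bounds the number of squares meeting $L_n$ by $c(n+1)$, and then must control the length of each $\ell_-$-diagonal inside $L_n$; precisely because such a diagonal can be nearly parallel to $\gamma$ (the obstacle you correctly identify), this forces a four-step case analysis on $h'$ (slope bounded away from $-1$, slope $\varepsilon$-close to $-1$ with a geometric estimate of the side lengths and an appeal to Schl\"omilch's condensation test, the general $C^1$ case by uniform continuity, and the case of unbounded $h'$). You instead organize the sum by squares rather than by level sets: the crude bound $d_\infty(x,\gamma)\le \ell(Q)$ on all of $Q$ (valid since the north-east corner of every $Q\in\mathcal{Q}(T)$ lies on the graph of $h\subset\gamma$) gives the per-square estimate $\sqrt2\,\ell(Q)H(\ell(Q))$, and the packing argument --- disjoint interiors plus containment of every square of side $\le 2^{-k}$ in a $d_\infty$-tube of width $2^{-k}$ and area $O(2^{-k})$ around the graph --- yields $M_k\le C2^k$, so the total is controlled by $\sum_k H(2^{-k})$, which converges exactly under hypothesis \eqref{hyp_H_crescita}. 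All steps check out (the tube-area bound does follow from monotonicity of $h$ alone, and in the normalization of the lemma every square has side $<1$ so every $Q$ falls in some class $k\ge 0$). What your approach buys is brevity and robustness: it uses no regularity of $h$ beyond monotonicity, and the identical per-square bound $(4+2\sqrt2)\,\ell(Q)H(\ell(Q))$ for the full boundary-plus-diagonals of $Q$ would dispose of Lemmata \ref{lemma-D+} and \ref{lemma-S} as well in one stroke. What the paper's level-set approach buys is finer localized information on how the jump set accumulates near $\gamma$ (the measure of the jump set in each shell $L_n$), which your global size-class count does not provide; but for the statement of this lemma that information is not needed.
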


\begin{proof}
If $L_i$ denotes the total length of
the diagonals parallel to $\ell_{-}$ of the squares added at the
$i$-th step of the construction of $v$, then
\begin{equation}\label{diagonalimeno_step_i}
L_{i} \leq  2 \Haus^1(\gamma).
\end{equation}
Indeed $L_{i}$ is nothing but the length of the non flat parts of the graph of a 
piecewise affine function defined on the interval $(0,1)$ and 
whose derivative is $0$ or $-1$ everywhere but on a finite number of points.

The proof is divided into several steps.
We will denote by $\mathtt{r}_{n}$  the length of the side of $Q_T^{n,\sigma}$ with $\sigma=(\underbrace{r,\dots, r}_{n})$ 
and
$\mathtt{R}_{{n}}=\sum_{i=1}^{{n}}\mathtt{r}_i$;
$\mathtt{u}_{n}$ will denote the length of the side of $Q_T^{n,\sigma}$ with $\sigma=(\underbrace{u,\dots, u}_{n})$.

\vspace{0.2cm}
\emph{Step 1.} Let $h\in C^1((0,1))$.
Assume that there exists $0<\varepsilon<1$ such that $-1+\varepsilon<h'(t)<0$ for every $t\in
(0,1)$. Let ${d}^{-}_n$ be the intersection of $L_n$ with the
diagonal  parallel to $\ell_{-}$ of a square $Q\in \mathcal{Q}(T)$.   
Let $\displaystyle x_1^Q=\min\limits_{(x_1,x_2)\in d_n^{-}} x_1$. Assume that $(x_1^Q,x_2^Q)\in {d}^{-}_n$.  Then 
$$
h\left(x_1^Q+\frac{1}{n}\right)-\frac{1}{n}\leq x_2^Q\leq h\left(x_1^Q+\frac{1}{n+1}\right)-\frac{1}{n+1}\,,
$$ 
since $(x_1^Q,x_2^Q)\in L_n$.
The intersection between ${d}^{-}_n$,  belonging to the line $x_2=-x_1+x_1^Q+x_2^Q$, 
and the lower boundary of $L_n$, that is, $x_2=h\left(x_1+\frac 1n\right)-\frac 1n$, gives 
$$
\begin{array}{ll}
\displaystyle x_1-x_1^Q&\displaystyle =x_2^Q+\frac 1n -h\left(x_1+\frac 1n\right)
\\
&\displaystyle \leq h\left(x_1^Q+\frac 1{n+1}\right)-\frac{1}{n+1}+\frac 1n -h\left(x_1+\frac 1n\right)\,.
\end{array}
$$
Lagrange's theorem implies that
$$
x_1-x_1^Q
\leq |h'(\xi)|\left[x_1-x_1^Q+\frac 1n-\frac{1}{n+1}\right]+\frac 1n-\frac{1}{n+1}
$$
for some $\xi \in \left[x_1^Q+\frac 1{n+1},x_1+\frac 1n\right]$. Using the hypothesis on $h$
we get
$$ 
x_1-x_1^Q\leq (1-\varepsilon)\left(x_1-x_1^Q+\frac{1}{n^2}\right)+\frac{1}{n^2}\,,
$$
that is, 
$$
x_1-x_1^Q\leq \frac{2-\varepsilon}{\varepsilon}\frac{1}{n^2}\,.
$$
This implies that 
$$
\mathcal{H}^1(d_n^{-})\leq\sqrt{2}\frac{2-\varepsilon}{\varepsilon}\frac{1}{n^2}
$$
for a given square $Q$ such that $Q\cap L_n\neq \emptyset$.
Using estimate (\ref{formula_number_squares}) on the number of squares intersecting $L_n$,
the previous inequality gives
$$
\int_{\mathcal{D}_T^-}H(d_{\infty}(x,\gamma))d\mathcal{H}^{1}
\leq c_1(\varepsilon,\gamma)\sum_{n=1}^{\infty} H\left(\frac{1}{n}\right)\frac{1}{n^2}(n+1)
\leq c_2(\varepsilon,\gamma)\sum_{n=1}^{\infty} H\left(\frac{1}{n}\right)\frac{1}{n}\,.
$$
where $c_1(\varepsilon,\gamma)$ and $c_2(\varepsilon,\gamma)$ denote two positive constants depending only on $\gamma$ and $\varepsilon$.
Hypothesis (\ref{hyp_H_crescita}) on $H$ implies that the last sum is
finite.

The case where $-1+\varepsilon<h'(t)<0$ for every $t\in (0,1)$
 can be handled in a
similar way.

\vspace{0.2cm}
\emph{Step 2.} Assume  that there exists $0<\varepsilon<1$ such that $|h'(t)+1|\leq\eps$ for every
$t\in [0,1]$. We are going to prove by induction that
the length of the side $l_{i,k}$, $k=1,\dots,2^i$ of any square $Q_T^{i,k}$ added at the $i$-th step of the covering of Definition \ref{definition_covering} 
satisfies:
\begin{equation}\label{estim-01}
\frac{1}{(2+\eps)^{i}} \leq l_{i,k} \leq \frac{1}{(2-\eps)^{i}}\,\,\,\,\,\forall\, k=1,\dots,2^i\,.
\end{equation}
To to that, observe that the length of the side $\tilde{l}$ of $q(\tilde{T})$, for a given domain $\tilde{T}=\{(x_1,x_2): a<x_1<b, c<x_2<h(x_1)\}$,
 can be estimated by
\begin{equation}\label{first_square_general_domain}
\frac{\max\{b-a, h(a)-c\}}{2+\eps} \leq \tilde{l} \leq \frac{\min\{b-a, h(a)-c\}}{2-\eps}\,.
\end{equation}
Indeed, it is sufficient to compute the intersections between the lines  $x_2=x_1+c-a$ and 
$x_2= (-1\pm\eps) (x_1-b) + c$ or $x_2= (-1\pm\eps) (x_1-a) + h(a)$.
This implies that the length $l_{0,1}$ of the side  of $q(T)$
satisfies
\begin{equation}\label{estim-00}
\frac{1}{2+\eps} \leq l_{0,1} \leq \frac{1}{2-\eps}\,.
\end{equation}
Now, suppose that  estimate \eqref{estim-01} holds for $i-1$. 
At step $i$ we add $2^{i}$ squares and any of these is confined in a  domain belonging to $\mathcal{T}$ 
with one of his sides that coincides with the side of one of the squares added in the previous step.
Thus, to prove that estimate \eqref{estim-01} holds for $i$, 
it suffices to use (\ref{first_square_general_domain})
with $\frac{1}{(2+\eps)^{i-1}}\leq \tilde{l}\leq \frac{1}{(2-\eps)^{i-1}}$.

We need to estimate the $l^{\infty}$-distance from $\gamma$ of
the diagonal ${d}^-$ parallel to $\ell_{-}$ of a square $Q\in \mathcal{Q}(T)$. 
Assume that the length of the side of $Q$ is $l$.
Let $x^Q=(x_1^Q,x_2^Q)\in \gamma$ be the north-east corner of $Q$. Then the $l^{\infty}$-distance from $\gamma$ of  ${d}^-$
is smaller than  the $l^{\infty}$-distance  of  ${d}^-$ from the
piecewise affine function
\[
x_2(x_1)=
\begin{cases}
  (-1+\varepsilon)(x_1-x_1^Q)+x_2^Q\,,  & \textrm{if }\; x_1 > x_1^Q \\
   (-1+\varepsilon)(x_1-x_1^Q)+x_2^Q\,,  & \textrm{if }\; x_1 \leq x_1^Q
\end{cases}.
\]

This gives
\begin{equation}\label{estim-02}
d_{\infty}({d}^-,\gamma)\leq \frac{1+\eps}{2-\eps}l.
\end{equation}

Using the previous estimates and (\ref{diagonalimeno_step_i}) we obtain
\[
\begin{split}
\int_{\mathcal{D}_T^-} H(d_{\infty}(x,\gamma)) \, d\mathcal{H}^{1} &  \leq \sum_{n=1}^{\infty} L_{n} H\left( \frac{1+\eps}{(2-\eps)^{n+1}} \right) \\
&  \leq 2  \Haus^1(\gamma) \sum_{n=1}^{\infty} H\left( \frac{1+\eps}{(2-\eps)^{n+1}} \right)\,.
\end{split}
\]
In order to prove that the last sum is finite, we start by observing that, by the mo\-no\-to\-ni\-ci\-ty of $H$,
\[
H\left( \frac{1+\eps}{(2-\eps)^{n+1}} \right) \leq H\left(\frac{2}{[(2-\eps)^{n}]} \right)\,.
\]
 We now let 
\[
u_{n}=[(2-\eps)^{n}]\;\; \textrm{ and } \;\; a_{n}= H\left( \frac{2}{ n} \right) \frac{1}{n}
\]
and we apply the Schl\"omilch's generalization of the condensation
criterion for series to deduce the desired convergence. We recall that, if
$a_{n}$ is a positive non increasing sequence of real numbers and
$u_{n}$ a strictly increasing sequence of natural numbers such that for some positive constant $C$
\begin{equation}\label{hip-sch-01}
\frac{u_{n+1}-u_{n}}{u_{n}-u_{n-1}}\leq C\,\,\,\forall\,n\in \N
\end{equation}  
 then $\displaystyle\sum_{n=0}^{\infty}a_{n}$ is finite if and only if $\displaystyle
\sum_{n=0}^{\infty} (u_{n+1}-u_{n})a_{u_{n}}$ is finite.
In our case we observe that the convergence of the series $\displaystyle\sum_{n=0}^{\infty}a_{n}$ is assured by  hypothesis (\ref{hyp_H_crescita})
on $H$. Moreover for every $n\geq n(\varepsilon)$ 
\[
\begin{split}
c(\varepsilon)[(2-\varepsilon)^n] \leq u_{n+1}-u_{n} &  \leq (2-\eps)^{n+1}+1-(2-\eps)^{n} \\
 &  \leq \big([(2-\eps)^{n}]+1\big)(1-\eps)+1 \\
 & \leq [(2-\eps)^{n}]+2-\eps
 \end{split}
\] 
for some positive constant $c(\varepsilon)$ independent of $n$. This implies that hypothesis (\ref{hip-sch-01}) is satisfied.
Therefore there exists a positive constant $C$ such that
\[
\begin{split}
\sum_{n=0}^{\infty} H  \left(  \frac{2}{[(2-\eps)^{n}]} \right) &  = \sum_{n=0}^{\infty} H\left( \frac{2}{[(2-\eps)^{n}]} \right) \frac{1}{[(2-\eps)^{n}]}[(2-\eps)^{n}] \\
& \leq C+ \frac {1}{c(\varepsilon)}\sum_{n=n(\varepsilon)}^{\infty} (u_{n+1}-u_{n})a_{u_{n}} < \infty\,.
\end{split}
\]

\vspace{0.2cm}
\emph{Step 3.} 
Assume that $h\in C^1([0,1])$ and fix $0<\eps<1$.
The uniform continuity of $h'$ implies that there exists $\delta>0$
such that if $|t-s|<\delta$ then $|h'(t)-h'(s)|<\varepsilon/4$. Let
$n' \in \N$ be such that the length of the side of $Q_T^{n',\sigma}$  is
less than $\delta$ for every $\sigma \in S_{n'}$. 
Let $(x_1^\sigma,x_2^\sigma)$ be the north-east
corner of $Q_T^{n',\sigma}$.
For any $\sigma
\in S_{n'}$, if $|h'(x_1^\sigma)+1|\leq
\varepsilon/2$, then
\[
|h'(x_1)+1|\leq |h'(x_1)-h'(x_1^\sigma)|+|h'(x_1^\sigma)+1| < \varepsilon
\] 
and if $|h'(x_1^\sigma)+1|> \varepsilon/2$, then
\[
|h'(x_1)+1| \geq |h'(x_1^\sigma)+1|-|h'(x_1)-h'(x_1^\sigma)| > \frac{\varepsilon}{4}.
\] 
Let $N(n')=1+\sum_{i=0}^{n'}2^i=2^{n'+1}$.
 Using (\ref{diagonalimeno_step_i}) we have
$$
\int_{\mathcal{D}_T^-} H(d_{\infty}(x,\gamma)) \, d\mathcal{H}^{1}
\leq 
n'\mathcal{H}^1(\gamma)+
\sum_{i=1}^{N(n')}\int_{\mathcal{D}_{T_i}^-} H(d_{\infty}(x,\gamma))
$$
where, up to dilatations, 
$T_i$, with $i=1\dots N(n')$, 
are triangular domains satisfying the hypotheses of \emph{Step 1} or \emph{2}.
This implies that
$
\int_{\mathcal{D}_T^-} H(d_{\infty}(x,\gamma)) \, d\mathcal{H}^{1}$ is finite.

\vspace{0.2cm}
\emph{Step 4.} 
Assume that
$h\in C^1((0,1))$ and  $\lim\limits_{t\to 0}h'(t)=-\infty$ or 
$\lim\limits_{t\to 1}h'(t)=-\infty$. 
Surely there exists $n'' \in \N$ such that the intervals $[0,\mathtt{u}_{n''}]$, $[\mathtt{R}_{n''},1]$ 
do not contain any $x_1$ such that $h'(x_1)=-1$.
Let $N(n'')=\sum_{i=0}^{n''}2^i-2=2^{n''+1}-3$.
Using (\ref{diagonalimeno_step_i}) we get the following estimate:
$$
\int_{\mathcal{D}_T^-} H(d_{\infty}(x,\gamma)) \, d\mathcal{H}^{1}
\leq 
n''\mathcal{H}^1(\gamma)+
\sum_{i=1}^{N(n'')}\int_{\mathcal{D}_{T_i}^-} H(d_{\infty}(x,\gamma))
$$
where, up to dilatations, 
$T_i$, with $i=1\dots N(n'')$, 
are triangular domains satisfying the hypotheses of \emph{Step 3}.
This implies that
$
\int_{\mathcal{D}_T^-} H(d_{\infty}(x,\gamma)) \, d\mathcal{H}^{1}$ is finite.

\end{proof}

\begin{lemma}\label{lemma-S}
There exists a positive constant $M$ such that
\[
 \int_{\mathcal{S}_{T}} H(d_{\infty}(x,\gamma))d\mathcal{H}^1 \leq M.
\]\end{lemma}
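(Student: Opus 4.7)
The plan is to mirror the structure of the proof of Lemma~\ref{lemma-D-}. We stratify the integral by the distance layers $L_n$ and use monotonicity of $H$ to obtain
\[
\int_{\mathcal{S}_T} H(d_\infty(x,\gamma))\, d\mathcal{H}^1 \le \sum_{n=1}^\infty H(1/n)\, \mathcal{H}^1(\mathcal{S}_T \cap L_n).
\]
Since (\ref{hyp_H_crescita}) is equivalent to $\sum_n H(1/n)/n<\infty$ by Cauchy condensation, it suffices to show $\mathcal{H}^1(\mathcal{S}_T\cap L_n)\le C/n$ in the bounded-slope regime, and to regroup the contributions of the degenerate-slope sub-triangles into a Schl\"omilch-type series as in Step~2 of Lemma~\ref{lemma-D-}.

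Every segment of $\mathcal{S}_T$ is a horizontal or vertical side of some square in $\mathcal{Q}(T)$ and is shared by at most two such squares, so
\[
\mathcal{H}^1(\mathcal{S}_T \cap L_n) \le 2\sum_{Q:\; Q\cap L_n\ne\emptyset}\bigl[\mathtt{e}_n(Q)+\mathtt{n}_n(Q)\bigr].
\]
In the model regime $-M\le h'(t)\le -\varepsilon<0$ on $[0,1]$, Lagrange's theorem applied to the finite increments in (\ref{vertical_intersection}) and (\ref{horizontal_intersection}) yields $\mathtt{e}_n(Q)+\mathtt{n}_n(Q)\le C(M,\varepsilon)/(n(n+1))$; combined with the cardinality bound $N_n\le c(n+1)$ of Lemma~\ref{lemma_number_squares}, this gives $\mathcal{H}^1(\mathcal{S}_T \cap L_n)\le C/n$ and settles this case.

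For general $h\in C^1((0,1))$ the slope may degenerate at the endpoints of $[0,1]$ toward $0^-$ (making (\ref{horizontal_intersection}) blow up, because $(h^{-1})'$ becomes unbounded) or toward $-\infty$ (making (\ref{vertical_intersection}) blow up). Following Steps~3--4 of Lemma~\ref{lemma-D-}, we run the covering construction for finitely many iterations to split $T$ into a finite union of sub-triangles, on each of which either the slope falls into the model regime above or the singular endpoint is isolated in a small neighborhood. The reflection $(x_1,x_2)\mapsto(x_2,x_1)$ interchanges $h$ with $h^{-1}$ and swaps the roles of horizontal and vertical sides while leaving $d_\infty(\cdot,\gamma)$ invariant; it sends $\mathcal{Q}(T)$ to the analogous covering built from $h^{-1}$ (with the operators $u$ and $r$ interchanged), and hence reduces the regime $h'\to 0^-$ to the regime $h'\to -\infty$. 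On the remaining singular sub-triangles (where $|h'|\to\infty$), the total side length added at each step is bounded by a constant multiple of $\mathcal{H}^1(\gamma)$, as in (\ref{diagonalimeno_step_i}); tracking the $l^\infty$ distance from $\gamma$ of the sides added at step $m$ and invoking the Schl\"omilch condensation criterion, exactly as was done for the diagonals in Step~2 of Lemma~\ref{lemma-D-}, yields the desired convergence.

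The main obstacle is that, unlike for Lemma~\ref{lemma-D-} where the diagonal estimate degenerates only when $h'\approx -1$, the side estimates here degenerate in \emph{two} different regimes, and both must be handled. The reflection argument reduces one to the other, but one still has to verify the geometric decay of the side lengths at successive steps of the construction---which is immediate in the $h'\approx -1$ regime by (\ref{estim-01}), but requires additional work after rescaling in the singular regime---and to organize the decomposition so that the finitely many pathological sub-triangles at each scale aggregate to a convergent series compatible with $\int_0^1 H(t)/t\,dt<\infty$.
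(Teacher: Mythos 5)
Your handling of the bounded-slope regime coincides with the paper's Step 1 (layers $L_n$, the bounds \eqref{vertical_intersection}--\eqref{horizontal_intersection}, the count \eqref{formula_number_squares}, and the comparison of \eqref{hyp_H_crescita} with $\sum_n H(1/n)/n$), and your observation that the reflection $(x_1,x_2)\mapsto(x_2,x_1)$ exchanges the two degenerate regimes $h'\to 0^-$ and $h'\to-\infty$ matches the paper's reduction ``up to inverting the coordinate axes''. The gap is in your plan for the degenerate regime. You propose to index the sides by the step $m$ of the covering construction, bound the total side length added at step $m$ by $C\,\mathcal{H}^1(\gamma)$ (this part is fine: the level-$m$ sub-triangles have pairwise disjoint projections on both axes), and then conclude by Schl\"omilch condensation exactly as in Step 2 of Lemma \ref{lemma-D-}. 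But that argument hinges on the geometric decay \eqref{estim-01} of the side lengths in $m$, which is proved only under $|h'+1|\le\eps$ and genuinely fails in the singular regime: for the sub-triangles $r^m(T)$ accumulating at a corner where $h'\to 0^-$, the side lengths obey roughly $\ell_{m+1}\approx \ell_m/(1+s_m)$ with $s_m=|h'|\to 0$, so $\ell_{m+1}/\ell_m\to 1$ and the decay can be arbitrarily slow (polynomial for $h(x)\sim (1-x)^2$, and worse for flatter $h$). You flag this as ``additional work after rescaling'', but no rescaling restores geometric decay, so the condensation step has nothing to condense against and the argument does not close. (The analogy with the diagonals is misleading here: for $\mathcal{D}_T^-$ the distance to $\gamma$ is already $O(1/n(n+1))$ per square by \eqref{diagonal_intersection} except when $h'\approx-1$, which is precisely the regime where \eqref{estim-01} does hold.)

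The paper closes the degenerate cases by a different mechanism, still organized by the layers $L_n$ rather than by the construction step. Refining Lemma \ref{lemma_number_squares}, one partitions $\gamma$ by points $x_1^j$ at mutual distance $\sim 1/n$ and notes that each resulting arc carries the north-east corner of at most one square meeting $L_n$. The dangerous factor in \eqref{horizontal_intersection} (resp.\ \eqref{vertical_intersection}) is then $\sup_{I_j}|(h^{-1})'|$ (resp.\ $\sup_{I_j}|h'|$) over a short interval $I_j$, and the key point is that
\[
\sum_j \frac{1}{n}\,\sup_{I_j}\big|(h^{-1})'\big|
\]
is a Riemann sum for $\int |(h^{-1})'|$, which is finite because $h^{-1}$ is monotone and bounded (likewise $\int|h'|<\infty$ in the steep regime). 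This converts the pointwise unbounded slope into a finite total-variation integral, yields $\mathcal{H}^1(\mathcal{S}_T\cap L_n)\le C/n$ uniformly in $n$, and then \eqref{hyp_H_crescita} finishes the proof. This Riemann-sum/total-variation idea is the ingredient your proposal is missing.
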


\begin{proof}
 First we observe that, since $T$ is a triangular domain, up to
 inverting the coordinate axes, we can assume that we are in one of
 the following cases:

\vspace{0,2cm}
\emph{Case 1.} There exist two constants $c_1,c_2 >0$ such that $-c_1\leq h'(t)\leq -c_2<0$ for every $t\in [0,1]$.

\vspace{0,2cm}
\emph{Case 2.} There exists a constant $c_1<0$ such that $c_1\leq h'(t)<0$ for every $t\in (0,1)$ and $h'(1)=0$.

\vspace{0,2cm}
\emph{Case 3.} There exists a constant $c_1<0$ such that $c_1\leq h'(t)<0$ for every $t\in (0,1)$ and  $\lim\limits_{t\to 1}h'(t)=-\infty$.

\vspace{0,2cm}
\emph{Step 1.} Assume that we are in the hypotheses of \emph{Case
  1}. We observe that $h$ and $h^{-1}$ are Lipschitz functions, say with Lipschitz constant $\tilde{c}$; then (\ref{vertical_intersection}) and (\ref{horizontal_intersection}) 
together with estimate (\ref{formula_number_squares})
imply that 
$$
\int_{\mathcal{S}_T}H(d_{\infty}(x,\gamma))d\mathcal{H}^1
\leq 
\sum_{n=1}^{\infty}H\left(\frac 1n\right)\frac{\tilde{c}}{n(n+1)}c(n+1)
$$
where $c$ depends only on $\gamma$.
The last sum  is finite due to hypothesis (\ref{hyp_H_crescita}) on $H$.

\vspace{0,2cm}
\emph{Step 2.} Assume that we are in the hypotheses of \emph{Case 2}. It is sufficient to estimate
$$
\int_{\mathcal{S_T}\cap r(T)}H(d_{\infty}(x,\gamma))d\mathcal{H}^1,
$$
since  \emph{Step 1} implies that
$$
\int_{\mathcal{S}_T\cap u(T)}H(d_{\infty}(x,\gamma))d\mathcal{H}^1
$$
is bounded. 
To this purpose, we define the following sequence of points. Let $x_1^0\in (0,1)$ be such that
$h(x_1^0)=x_1^0$. We set
$$\dys x_1^j=h^{-1}\left(x_1^0-\frac jn\right)\,,\,\,\,\,j=1,\dots, [x_1^0 n]\,=:M_n.$$
There exists at most one point $(\tilde{x}^j_1,h(\tilde{x}^j_1))$ with $x_1^{j}\leq \tilde{x}^j_1<x_1^{j+1}$, $j=1,\dots, M_n-1$, 
which is the north-east vertex of a square $Q^j\in \mathcal{Q}(T)$ such that $L_n\cap Q^j$ is not empty. 
For such a square, since $|h'|$ is bounded, one has
\begin{equation}\label{vertical}
\mathtt{e}_n(Q^j)\leq\left[1+\sup_{x_1\in [0,1]}\left|h'\left(x_1\right)\right|\right]\frac{1}{n^2}\leq C_1\frac{1}{n^2}
\end{equation}
for some positive constant $C_1$.
As well
$$
\begin{array}{ll}
\displaystyle
\mathtt{n}_n(Q^j)&
\displaystyle
\leq \frac{1}{n(n+1)}\left[1+\sup_{x_2 \in [h(\tilde{x}^j_1)+\frac{1}{n+1}, h(\tilde{x}^j_1)+\frac{1}{n}]}|(h^{-1})'(x_2)|\right]
\vspace{0.1cm}
\\
&\displaystyle
\leq\frac{1}{n^2}\left[1+\sup_{x_2 \in [h({x}^{j+1}_1), h({x}_1^{j-1})]}\frac{1}{|h'(h^{-1}(x_2))|}\right]
\vspace{0.1cm}
\\
&\displaystyle
\leq\frac{1}{n^2}\left[1+\sup_{x_2 \in [h({x}^{j+1}_1), h({x}_1^{j})]}\frac{1}{|h'(h^{-1}(x_2))|}+
\sup_{x_2 \in [h({x}^{j}_1), h({x}_1^{j-1})]}\frac{1}{|h'(h^{-1}(x_2))|}
\right].
\end{array}
$$

We remark that
$$\displaystyle\sum_{j=1}^{M_n}\frac{1}{n}\left[\sup_{x_2 \in [h(x^j_1),h(x^{j-1}_1)]}\frac{1}{|h'(h^{-1}(x_2))|}\right]$$ 
is a particular Riemann sum for 
$$\displaystyle\int_{\frac 1n}^{x_1^0-\frac 1n}\frac{1}{|h'(h^{-1}(x_2))|}dx_2$$ 
which is finite. 
Therefore there exists $n_0 \in \N$ such that 
\begin{equation}\label{riemann_sum}
\sum_{j=1}^{M_n}\frac{1}{n}\left[\sup_{x_2 \in [h(x^j_1),h(x^{j-1}_1)]}\frac{1}{|h'(h^{-1}(x_2))|}\right]
\leq
\int_{0}^{x_1^0}\frac{1}{|h'(h^{-1}(x_2))|}dx_2+1\leq C_2
\end{equation} 
 for every $n\geq n_0$, for some positive constant $C_2$ independent of $n$.
Estimates (\ref{vertical}) and (\ref{riemann_sum}) give
$$
\begin{array}{l}
\dys
 \int_{\mathcal{S}\cap r(T)}H(d_{\infty}(x,\gamma))d\mathcal{H}^1\leq
\\
\dys \leq 
\dys\sum_{n=n_0}^{\infty} H\left(\frac{1}{n}\right)\sum_{j=1}^{M_n}\left\{\frac{2}{n^2}\sup_{x_2 \in [h(x^j_1),h(x^{j-1}_1)]}\frac{1}{|h'(h^{-1}(x_2))|}+\frac{C_1+1}{n^2}\right\}+
\\
\dys
+\sum_{n=1}^{n_0}
H\left(\frac{1}{n}\right)\mathcal{H}^1(\mathcal{S}\cap r(T)\cap \overline{L_n})
\\
\dys
\leq 
C_2\dys\sum_{n=1}^{\infty} H\left(\frac{1}{n}\right)\frac{1}{n}
+(C_1+1)\sum_{n=1}^{\infty}H\left(\frac{1}{n}\right)\frac{1}{n}+C_3\,,
\end{array}
$$
where we have used that
 for $n\leq n_0$,  $\dys H\left(\frac{1}{n}\right)\leq H\left(\frac{1}{n_0}\right)$ and $\mathcal{H}^1(\mathcal{S}\cap r(T)\cap \overline{L_n})$
 is finite, since $v_{x_i}, i=1,2$ is $SBV_{loc}(T)$. 
The last sum is finite due to hypothesis (\ref{hyp_H_crescita}) on $H$.

\vspace{0,2cm}
\emph{Step 3.}  Assume that we are in the hypotheses of \emph{Case
  3}. As in the previous step it is sufficient to estimate
$$
\int_{\mathcal{S}_T\cap r(T)}H(d_{\infty}(x,\gamma))d\mathcal{H}^1.
$$

Let $x_1^0\in (0,1)$ be such that
$h(x_1^0)=x_1^0$. Let us consider the following sequence of points:
$$x^j_1=x^0_1+\frac jn\,,\,\,\,\,\,\, j=1,\dots,M_n=[n(1-x_1^0)]\,.$$

There exists at most one point $(\tilde{x}^j_1,h(\tilde{x}^j_1))$ with $x_1^{j}\leq \tilde{x}^j_1<x_1^{j+1}$, $j=1,\dots, M_n-1$, 
which is the north-east vertex of a square $Q^j\in \mathcal{Q}(T)$ such that $L_n\cap Q^j$ is not empty.

For such a square, since $|(h^{-1})'|$ is bounded, one has
\begin{equation}\label{vertical_2}
\mathtt{n}_n(Q^j)\leq\left[1+\sup_{x_1\in [0,h(0)]}\left|(h^{-1})'(x_1)\right|\right]\frac{1}{n^2}\leq C_1\frac{1}{n^2}
\end{equation}
for some positive constant $C_1$.
As well
$$
\begin{array}{ll}
\displaystyle
\mathtt{e}_n(Q^j)&
\displaystyle
\leq \frac{1}{n(n+1)}\left[1+\sup_{x_1 \in [\tilde{x}^j_1+\frac{1}{n+1}, \tilde{x}^j_1+\frac{1}{n}]}|h'(x_1)|\right]
\vspace{0.1cm}
\\
&\displaystyle
\leq\frac{1}{n^2}\left[1+\sup_{x_1 \in [{x}^{j}_1, {x}_1^{j+2}]}{|h'(x_1)|}\right]
\vspace{0.1cm}
\\
&\displaystyle
\leq\frac{1}{n^2}\left[1+\sup_{x_2 \in [{x}^{j}_1, {x}_1^{j+1}]}{|h'(x_1)|}+
\sup_{x_2 \in [{x}^{j+1}_1, {x}_1^{j+2}]}\frac{1}{|h'(x_1)|}
\right]\,.
\end{array}
$$

We remark that
$$\displaystyle \sum_{j=1}^{M_n}{\sup_{x_1\in [x^j_1,x^{j+1}_1]}|h'(x_1)|}{\frac 1n}
$$
is a Riemann sum for
$$\displaystyle \int_{x_1^0}^{1-\frac 1n}|h'(t)|dt$$ which is finite. Then there exists ${n}_0 \in \N$ such that 
$$
\frac 1n\sum_{j=1}^{M_n}{\sup_{x_1\in [x^j_1,x^{j+1}_1]}|h'(y)|}<\int_{x_1^0}^1|h'(t)|dt+1\leq C_2
$$
for every $n\geq n_0$, for some positive constant $C_2$ independent of $n$.
Therefore, arguing as in the previous step, we have
$$
\begin{array}{l}
\displaystyle\int_{\mathcal{S}\cap r(T)}H(d_{\infty}(x,\gamma)) d\mathcal{H}^1 
\\
\displaystyle
\leq 
\sum_{n=n_0}^{\infty}H\left(\frac{1}{n}\right)\sum_{j=1}^{M_n}\left\{
\frac{2}{n^2}\sup_{x_1\in [x^j_1,x^{j+1}_1]}|h'(x_1)|+\frac{C_1+1}{n^2}
\right\}
\\
\displaystyle +\sum_{n=1}^{n_0}H\left(\frac{1}{n}\right)\mathcal{H}^1(\mathcal{S}\cap r(T)\cap \overline{L_n})
\\
\displaystyle
\leq C_2
\sum_{n=1}^{\infty}H\left(\frac{1}{n}\right) \frac{1}{n}
+(C_1+1)\sum_{n=1}^{\infty}H\left(\frac{1}{n}\right)\frac{1}{n}+C_3 < \infty\,,
\end{array}
$$
where $C_3$ denotes a positive constant independent of $n$.
\end{proof}

We have therefore shown that the functional $\mathcal{F}$ is well-defined.
We are now in position to show Theorem \ref{main_thm}.
The proof follows quite easily from the direct methods of the calculus
of variations. Note that this technique was already used in
\cite{Champion-Croce}. We start recalling a lemma proved in  \cite{Champion-Croce}. 

\begin{lemma}\label{distancel1}
Let $\Omega$ be an open bounded connected subset of $\R^N$ with Lipschitz boundary. Then
$$
- d_1(\cdot,\dom)\leq v \leq d_1(\cdot,\dom)
\quad on\,\, \overline{\Omega}
$$
for every function $v \in \mathfrak{S}(\Omega)$.
\end{lemma}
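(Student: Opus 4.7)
Because $\Omega$ is Lipschitz and $v\in W^{1,\infty}(\Omega)$, $v$ admits a Lipschitz representative on $\overline{\Omega}$ which vanishes on $\partial\Omega$ thanks to the boundary condition in \eqref{SEE}. Since $|\partial_{x_i}v|=1$ a.e.\ in $\Omega$, for every direction $w\in\R^N$ and a.e.\ $x\in\Omega$ one has
\[
|\nabla v(x)\cdot w|\le \sum_{i=1}^N |\partial_{x_i}v(x)||w_i|\le \sum_{i=1}^N|w_i|,
\]
so that $v$ is $1$-Lipschitz for the $l^1$-metric on any convex subset of $\Omega$. The strategy is to apply this Lipschitz property along the segment joining $x$ to a closest boundary point in the $l^1$-sense.

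Fix $x\in\Omega$ and pick $y^*\in\partial\Omega$ realising $d_1(x,y^*)=d_1(x,\partial\Omega)$, which exists by compactness of $\partial\Omega$. The key geometric observation is that the open segment $(x,y^*)$ is entirely contained in $\Omega$: if some $z=x+t(y^*-x)$ with $t\in(0,1)$ belonged to $\partial\Omega$, then
\[
d_1(x,z)=t\, d_1(x,y^*)<d_1(x,y^*),
\]
contradicting the minimality of $y^*$. Hence, parameterising $\gamma(t)=x+t(y^*-x)$ for $t\in[0,1]$, the composition $v\circ\gamma$ is Lipschitz on $[0,1]$, its a.e.\ derivative equals $\nabla v(\gamma(t))\cdot(y^*-x)$, and by the bound above it satisfies $|(v\circ\gamma)'(t)|\le \sum_i|y^*_i-x_i|=d_1(x,y^*)$ for a.e.\ $t$. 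The fundamental theorem of calculus combined with $v(y^*)=0$ then gives
\[
|v(x)|=|v(x)-v(y^*)|\le d_1(x,y^*)=d_1(x,\partial\Omega),
\]
which is the desired bound; for $x\in\partial\Omega$ both sides vanish, so the estimate extends to $\overline{\Omega}$.

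The only technical point is the transfer of the a.e.\ bound $|\nabla v\cdot(y^*-x)|\le d_1(x,y^*)$, valid on the $N$-dimensional set $\Omega$, to the fundamental theorem of calculus along the specific one-dimensional segment $\gamma$. I expect this to be the main obstacle requiring care, and I would handle it in the usual way by mollifying $v$ on $\epsilon$-interiors of $\Omega$ to produce $v_\epsilon\in C^\infty$ satisfying the same pointwise bound $|\partial_{x_i}v_\epsilon|\le 1$, applying the fundamental theorem of calculus to $v_\epsilon\circ\gamma$ on each compact subinterval $[0,1-\delta]$, and passing to the limit $\epsilon\to 0^+$ and then $\delta\to 0^+$ using the uniform continuity of $v$ on $\overline{\Omega}$ together with $v(y^*)=0$.
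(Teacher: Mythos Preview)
Your argument is correct. Note, however, that the paper does not actually prove this lemma: it simply quotes it from \cite{Champion-Croce}, so there is no ``paper's proof'' to compare against. Your write-up therefore supplies a self-contained justification that the paper omits.

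A couple of minor remarks on presentation. First, your argument that the open segment $(x,y^*)$ lies in $\Omega$ literally shows only that no interior point of the segment meets $\partial\Omega$; to conclude it lies in $\Omega$ you should add the one-line observation that the segment is connected, starts in the open set $\Omega$, and therefore cannot reach $\R^N\setminus\overline{\Omega}$ without first crossing $\partial\Omega$. Second, your handling of the ``technical point'' is exactly right: the a.e.\ bound $|\partial_{x_i}v|\le 1$ transfers to the bound along the specific segment via mollification on compact subsets, and the compactness of $\{\gamma(t):t\in[0,1-\delta]\}\subset\Omega$ guarantees that for each $\delta>0$ this set sits inside some $\Omega_\epsilon$. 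With those two clarifications the proof is complete.
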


\begin{proof}(of Theorem \ref{main_thm})
Let $(v^n)_n \subset \mathcal{E}(\Omega)$ be a minimizing sequence. 
Lemma \ref{distancel1} assures that  $(v^n)_n$ is uniformly bounded
 in $L^{\infty}(\Omega)$.
Moreover $v_n$ is uniformly Lipschitz in $\overline{\Omega}$ since
$\Omega$ is Lipschitz and $| \nabla v^n | \leq \sqrt{2}$ a.e. in $\Omega$ for every $n$.
Therefore, up to a subsequence, $v^n \to v^\infty$ in ${C}_0(\Omega)$
and $v^n \to v^\infty$ weakly* in $W^{1,\infty}(\Omega)$ for some
$v^\infty\in W_0^{1,\infty}(\Omega)$.

We are now going to show that $v^{\infty}$ belongs to $\mathcal{E}(\Omega)$.
Since $(v^n)_n$ is a minimizing sequence, there exists $C>0$ such that $\mathcal{F}(v^n)\leq C$ for every $n \in \N$.
For a fixed $m\in \N$ we can say that
$$
C\geq \mathcal{F}(v^n)\geq
\alpha(m)\sum^2_{i=1}\int\limits_{B_m} d(|D v^n_{x_i}|)(x), \quad \forall \,n \in \N
$$
where
$$
B_m=\left\{x \in \Omega: d_1(x,\partial \Omega)>\frac 1m\right\}\,.
$$
Let us fix $i \in \{1,2\}$.
Note that $\frac{\partial v^n}{\partial x_i}
\in SBV(B_m)$ and takes only the two values $\pm 1$ for every $n$.
This implies that
$$
\norm{\frac{\partial v^n}{\partial x_i}}_{BV(B_m)}\,\,=\,\,
\mathcal{L}^2(B_m)+2\mathcal{H}^{1}({J}_{v^n_{x_i}} \cap B_m)\,\, \leq \,\, \mathcal{L}^2(B_m)+2 C.
$$
We can apply Theorem \ref{compactnessSBV} to the sequence
$\dys \left(\frac{\partial v^n}{\partial x_i}\right)_n$:
\begin{itemize}
\item
hypothesis $i)$ has been verified in  the previous estimate;
\item
hypothesis $ii)$ is verified as $\nabla \frac{\partial v^n}{\partial x_i} = 0$ a.e. in $\Omega$;
\item 
hypothesis $iii)$ can be verified
choosing $f \equiv 1$: in this way
$$
\int_{J_{v^n_{x_i}} \cap B_m} f \left( \left[\frac{\partial v^n}{\partial x_i}\right] \right) d\mathcal{H}^{1}(x)
\,\,\, = \,\,\, \mathcal{H}^{1}(J_{v^n_{x_i}} \cap B_m) \,\,\, \leq \,\,\, C.
$$
\end{itemize}
Consequently $\dys \frac{\partial v^n}{\partial x_i} \to g_i$ weak*
in $BV(B_m)$ for some $g_i \in SBV(B_m)$ .
Since
$\dys \frac{\partial v^n}{\partial x_i} \to \frac{\partial v^\infty}{\partial x_i}$
weak* in $L^\infty(\Omega)$,
we infer that $\dys \frac{\partial v^\infty}{\partial x_i} \in SBV(B_m)$.
Moreover $\dys \frac{\partial v^n}{\partial x_i} \to \frac{\partial v^\infty}{\partial x_i}$
in $L^1(B_m)$
and so $\dys \left|\frac{\partial v^\infty}{\partial x_i}\right|=1$ a.e. in $B_m$.
This being true for every $i$ and for every $B_m$,
we deduce that  $v^\infty$ belongs to $\E(\Omega)$.

To show that $v^{\infty}$ is a minimizer of $\mathcal{F}$,
we remark that
$$
\liminf\limits_{n\to \infty} \int\limits_{B_m} H(d_1{(x,\partial \Omega)})d(|D v^n_{x_i}|)(x)\,
\geq
\int\limits_{B_m} H(d_1{(x,\partial \Omega)})d(|D v^{\infty}_{x_i}|)(x)
$$
for every $B_m$, due to Theorem \ref{semicontinuityBV}.
This implies that
\begin{eqnarray*}
\liminf\limits_{n\to \infty} \int\limits_{\Omega} H(d_1(x,\partial \Omega)\, d(|D v^n_{x_i}|)(x)
& \geq &
\sup_{B_m}
\int\limits_{B_m} H(d_1(x,\partial \Omega)\, d(|D v^{\infty}_{x_i}|)(x)
\\
& = & \int\limits_{\Omega} H(d_1(x,\partial \Omega)\, d(|D v^{\infty}_{x_i}|)(x)\,.
\end{eqnarray*}
Therefore
$\displaystyle \liminf\limits_{n\to \infty} \mathcal{F}(v^n) \geq \mathcal{F}(v^\infty)$,
i.e., $v^{\infty}$ minimizes $\mathcal{F}$.
\end{proof}




\end{document}